\newcommand{\cii}[1]{_{ {}_{ #1}}}
\newcommand{\eq}[2][label]{\begin{equation}\label{#1}#2\end{equation}}
\newcommand{\av}[2]{\langle #1\rangle\cii {#2}}
\newcommand{\df}{\buildrel\rm{def}\over=}
\newcommand{\Bel}{\mathbf B}
\newcommand{\bell}{\mathbf b}
\newcommand{\half}{\tfrac12}
\newcommand{\pd}{\partial}
\newcommand{\al}{\alpha}
\newcommand{\eps}{\varepsilon}
\newcommand{\la}{\lambda}
\newcommand{\cD}{\mathcal D}
\newcommand{\cT}{\mathcal T}
\newcommand{\bE}{\mathbb E}
\newcommand{\bR}{\mathbb R}
\newcommand{\bfU}{\mathbf U}
\newtheorem{theorem}{Theorem}[section]
\newtheorem{cor}[theorem]{Corollary}
\newtheorem{lemma}[theorem]{Lemma}
\newtheorem{prop}[theorem]{Proposition}
\theoremstyle{definition}
\newtheorem{defin}[theorem]{Definition}
\numberwithin{equation}{section}
\newcounter{vremennyj}
\def\Xint#1{\mathchoice
{\XXint\displaystyle\textstyle{#1}}%
{\XXint\textstyle\scriptstyle{#1}}%
{\XXint\scriptstyle\scriptscriptstyle{#1}}%
{\XXint\scriptscriptstyle\scriptscriptstyle{#1}}%
\!\int}
\def\XXint#1#2#3{{\setbox0=\hbox{$#1{#2#3}{\int}$ }
\vcenter{\hbox{$#2#3$ }}\kern-.6\wd0}}
\def\dashint{\Xint-}
\begin{document}
\title[Bellman function for square function in $L^p$]{Obstacle problems generated by the estimates of square function}
\author[I.~Holmes, A.~Volberg]{I.~Holmes,  A.~Volberg}
\thanks{I. Holmes is supported by National Science Foundation as an NSF Postdoc under Award No.1606270, A.~Volberg is partially supported by the NSF DMS-1600065.  This paper is  based upon work supported by the National Science Foundation under Grant No. DMS-1440140 A. Volberg was in residence at the Mathematical Sciences Research Institute in Berkeley, California, during the Spring and Fall  2017 semester. }

\address{Department of Mathematics, Michigan State University, East Lansing, MI 48823, USA}
\email{holmesir@math.msu.edu \textrm{(I.\ Holmes)}}
\address{Department of Mathematics, Michigan State University, East Lansing, MI 48823, USA}
\email{volberg@math.msu.edu \textrm{(A.\ Volberg)}}

\makeatletter
\@namedef{subjclassname@2010}{
  \textup{2010} Mathematics Subject Classification}
\makeatother
\subjclass[2010]{42B20, 42B35, 47A30}
\keywords{}

\begin{abstract} 
In this note we give the formula  for the Bellman function associated with the problem considered by B. Davis in \cite{Davis} in 1976. In this article the estimates of the type $\|Sf\|_p \le C_p \|f\|_p$, $p\ge 2$, were considered for the dyadic square function operator $S$, and Davis found the sharp values of  constants $C_p$. However, along with the sharp constants one can consider a more subtle characteristic of the above estimate. This quantity is called the Bellman function of the problem, and it seems to us that it was never proved that the confluent hypergeometric function from Davis' paper (second page) basically gives this Bellman function. Here we fill out this gap by finding the exact Bellman function of the unweighted $L^p$ estimate for operator $S$. We  cast the proofs in the language of obstacle problems.  For the sake of comparison, we also find the Bellman function of weak $(1,1)$ estimate of $S$. This formula was suggested by Bollobas \cite{Bollobas} and proved by Osekowski \cite{Os2009}, so it is not new, but we like to emphasize the common approach to those two Bellman functions descriptions.

\end{abstract}
\maketitle

\section[Obstacle problems for  square function operator]{Obstacle problems for unweighted square function operator: Burkholder--Gundy--Davis function}
\label{BGBB}

Recall that $h\cii J$ denotes the normalized in $L^2$ Haar function supported on interval $J$.
Let now $g$ be a test function on an interval $I$, then 
$$
g= \av gI {\bf 1}\cii I +\sum_{J\in \cD(I)}\! \Delta\cii J g
$$
with $\Delta\cii J g = (g, h\cii J)h\cii J$.
The square function of $g$ is the following aggregate:
$$
Sg(x)\df \Big(\!\!\sum_{\genfrac{}{}{0pt}{}{J\in \cD(I)}{x\in J}}\! (\Delta\cii J g)^2(x)\,\Big)^{1/2}\,.
$$

Marcinkiewicz--Paley inequalities \cite{Mar} relate the norms of $g-\av gI$ and $Sg$, claiming that for certain situations these norms can be equivalent.

Let $W(t)$ be the standard Brownian motion starting at zero, and $T$ be any  stopping time. Below $\|f\|_\alpha$ stands for $L^\alpha$ norm.

D. Burkholder \cite{BuMart}  P. Millar \cite{Mi}, 
A. A. Novikov \cite{No}, D. Burkholder and R. Gundy   \cite{BuGa},  B.~Davis  \cite{Davis},  found the following  norm estimates 
\eq[DavisBr01]{
c_\alpha  \| T^{1/2}\|_{\alpha}\leq \|W(T)\|_{\alpha}, \quad  1<\alpha<\infty, \,\, \| T^{1/2}\|_{\alpha}<\infty; 
}
\eq[DavisBr02]{
 \| W(T)\|_{\alpha}\leq C_\alpha\|T^{1/2}\|_{\alpha}, \quad 0<\alpha<\infty.  
 }

Davis ~\cite{Davis}   found the best possible values of constants above.

It was explained in \cite{Davis} that the same sharp estimates  (\ref{Davis01}) and (\ref{Davis02}) below  hold with $W(T)$ replaced by an integrable function $g$ on $[0,1]$, and $T^{1/2}$ replaced by  the dyadic square function of $g$.

More precisely, Davis proved that
\begin{align}
&c_\alpha  \| Sg\|_{\alpha}\leq \|g\|_{\alpha}, \quad  2\le\alpha <\infty; \label{Davis01}\\
& \| g\|_{\alpha}\leq C_\alpha\|Sg\|_{\alpha}, \quad 0<\alpha\le 2.  \label{Davis02}
\end{align}
with the same constants as above, and these constants are sharp in those ranges of $\alpha$ and $\beta$. Inequality \eqref{Davis02} with the same sharp constant as in \eqref{DavisBr02} but for the range $\beta \ge 3$ was proved by G.~Wang \cite{Wang}. In the range $\beta\in (2,3)$ the sharp constant in \eqref{Davis02} is not known to the best of our knowledge. The same can be said about \eqref{Davis01} in the range $\alpha\in (1, 2)$. Notice also that Wang's results  are proved for  square functions of conditionally symmetric martingales. So Wang's setting is more general than the dyadic setting presented here.

\bigskip

Our reasoning here first follows  the original proof by B. Davis of estimates \eqref{DavisBr01}, \eqref{Davis01}. based on the construction of a corresponding Bellman function. Davis considers two problems: 1) the continuous one, where stopping time serves as the replacement of the square function operator, 2) and a discrete one, concerning the dyadic square function operator  $S$ itself. 

For the continuous problem he  defines the Bellman function (on page  699 of \cite{Davis} it is called $v(t,x)$). But he seems to be leaving the finding of the Bellman function for the estimate of $S$ outside of the scope of his paper. 

We just fill out this small gap in the present note. This is done by Theorem \ref{bfU6_th}, the main part is Section \ref{the_smallest}.

\bigskip

But first we wish to cast the proofs in the language of obstacle problems. To prepare the ground we start with explanation what  are obstacle problems related to square function estimates.

\subsection{Obstacle problems related to square function estimates}
\label{OSqF}

We will always work with functions on some interval $I$, and $\cT\df \cT(I)$ is the class of test functions. We say that $f\in \cT$ if $f$ is constant
on each dyadic interval from $\cD_N(I)$ for some finite $N$. 

The main players will be  an ``arbitrary" function $O\colon\bR\times \bR_+\to \bR$ (an obstacle) and a  function $U\colon\bR\times \bR_+\to \bR$, $U\ge O$, satisfying the following inequality
\begin{equation}
\label{MISq}
2U(p, q) \ge U(p+a, \sqrt{a^2+q^2}) + U(p-a, \sqrt{a^2 +q^2})\,.
\end{equation}
We will call this {\it  the main inequality}, functions U satisfying the main inequality 
will be precisely  Bellman functions of various estimates concerning square function operator.  

Of course the existence of $U$ majorizing $O$ and satisfying \eqref{MISq} is not at all ensured. 

Notice that \eqref{MISq} is invariant under taking infimum.

\begin{defin} 
We call the smallest $U$ satisfying the main inequality and majorizing $O$  {\it the heat envelope} of $O$.
\end{defin}

We would like to find the heat envelope of some specific $O$. 
\begin{theorem}
\label{misq1}
Let $U$ satisfy main inequality \eqref{MISq}. Then for any $f\in \cT(I)$
\begin{equation}
\label{USq}
\av{U(f, \sqrt{q^2+(Sf)^2})}I \le U(\av fI, q)\,.
\end{equation}
\end{theorem}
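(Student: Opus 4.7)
The natural approach is induction on the depth $N$ such that $f \in \cT(I)$ is constant on $\cD_N(I)$. The main inequality \eqref{MISq} is tailor-made for one level of Haar splitting, and the ``heat'' variable $\sqrt{a^2+q^2}$ is designed precisely to match the way the square function aggregates Haar coefficients across scales.

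\textbf{Base case ($N=0$).} Here $f\equiv \av fI$ on $I$ and $Sf\equiv 0$, so both sides of \eqref{USq} equal $U(\av fI, q)$.

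\textbf{Inductive step.} Suppose the claim holds for all test functions of depth $N-1$, and let $f\in\cT(I)$ be of depth $N$. Split $I=I_-\cup I_+$ into its dyadic children, set $p\df \av fI$, and let $a$ be chosen so that $\av f{I_{\pm}} = p\pm a$; then $a$ is (up to normalization) the Haar coefficient $(f,h\cii I)$. The restrictions $f|_{I_\pm}$ are test functions of depth $N-1$ on $I_\pm$, so by induction, for \emph{any} nonnegative value $\tilde q$,
\begin{equation*}
\av{U\bigl(f,\sqrt{\tilde q^{\,2}+(Sf|_{I_\pm})^2}\,\bigr)}{I_\pm}\;\le\;U\bigl(p\pm a,\tilde q\bigr).
\end{equation*}
The crucial algebraic identity is that, for $x\in I_\pm$,
\begin{equation*}
(Sf)^2(x)\;=\;a^2+(Sf|_{I_\pm})^2(x),
\end{equation*}
because the only Haar summand on $I$ that contributes to $Sf$ on $I_\pm$ beyond those internal to $I_\pm$ is $\Delta\cii I f$, whose pointwise square equals $a^2$. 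Consequently, choosing $\tilde q\df \sqrt{q^2+a^2}$ gives $\sqrt{\tilde q^{\,2}+(Sf|_{I_\pm})^2}=\sqrt{q^2+(Sf)^2}$ on $I_\pm$.

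Averaging the two inductive bounds and then invoking \eqref{MISq} at the point $(p,q)$ with displacement $a$ yields
\begin{equation*}
\av{U(f,\sqrt{q^2+(Sf)^2})}{I}=\tfrac12\sum_{\pm}\av{U(f,\sqrt{q^2+(Sf)^2})}{I_\pm}\le\tfrac12\bigl[U(p+a,\sqrt{q^2+a^2})+U(p-a,\sqrt{q^2+a^2})\bigr]\le U(p,q),
\end{equation*}
which is exactly \eqref{USq}. There is no real obstacle: the entire subtlety was hidden in the statement of the main inequality, whose second-argument transformation $q\mapsto\sqrt{a^2+q^2}$ is precisely what the square function does at one dyadic scale. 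The only point worth verifying carefully in a polished write-up is the identity $(Sf)^2=a^2+(Sf|_{I_\pm})^2$ on $I_\pm$, which follows directly from the definition of $S$ and the disjointness of the supports of the Haar functions below $I$.
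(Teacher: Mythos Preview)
Your proof is correct and follows essentially the same approach as the paper: Bellman induction on the dyadic depth, applying the main inequality \eqref{MISq} once per dyadic split, with the identity $(Sf)^2 = a^2 + (Sf|_{I_\pm})^2$ on $I_\pm$ doing the bookkeeping for the second variable. The only cosmetic difference is the direction of the recursion: the paper works from the finest scale upward, replacing $f$ successively by its conditional expectations $f_k=\bE_{N-k}f$, whereas you induct from the top down by restricting to the children $I_\pm$; both arrangements apply \eqref{MISq} exactly once per dyadic interval and yield the same bound.
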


Here is a corollary relating the main inequality with square function estimates.

\begin{cor}
\label{misq1cor}
Let $U$ satisfy main inequality \eqref{MISq}. Then for any $f\in \cT(I)$
\begin{equation}
\label{USq}
\av{U(f, Sf)}I \le U(\av fI, 0)\,.
\end{equation}
\end{cor}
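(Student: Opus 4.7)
The plan is to derive Corollary \ref{misq1cor} as an immediate specialization of Theorem \ref{misq1}. Since the corollary's conclusion is formally identical to the theorem's conclusion after setting the free parameter $q$ equal to $0$, essentially no new work is required.

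More precisely, I would proceed as follows. First, apply Theorem \ref{misq1} with the choice $q = 0$. On the right-hand side this yields exactly $U(\av fI, 0)$, which matches the desired conclusion. On the left-hand side, the second argument becomes $\sqrt{0^2 + (Sf)^2}$. Since the square function $Sf$ is nonnegative pointwise (it is defined as the square root of a sum of squares), we have $\sqrt{(Sf)^2} = Sf$, so the left-hand side simplifies to $\av{U(f, Sf)}I$. Combining these two observations gives precisely \eqref{USq} in the form stated in the corollary.

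The only subtle point to notice is that $U$ is defined on $\bR \times \bR_+$, so we must verify that the substitution $q = 0$ is admissible; this is fine since $0 \in \bR_+$ under the standard convention (and the quantities $\sqrt{a^2 + 0^2} = |a|$ appearing in applications of the main inequality \eqref{MISq} remain in $\bR_+$). No further integration, induction, or convexity argument is needed at this stage, as all of the substantive content — the inductive exploitation of the main inequality \eqref{MISq} along the dyadic tree — has already been absorbed into the proof of Theorem \ref{misq1}. Thus the corollary really is a one-line consequence, and there is no genuine obstacle to overcome.
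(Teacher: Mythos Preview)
Your proposal is correct and matches the paper's approach: the corollary is stated immediately after Theorem \ref{misq1} without separate proof, precisely because it is the specialization $q=0$ you describe. There is nothing more to add.
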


Before proving Theorem \ref{misq1},  we wish to answer the question, when, given $O$, one can find a finite valued function majorizing $O$ and satisfying the main inequality.

\begin{theorem}
\label{fatU}
Let
\eq[fatUeq]
{
\bfU(p, q) \df  \sup_{\genfrac{}{}{0pt}{}{f\in \cT(I)}{\av fI=p}} \av{O(f, \sqrt{q^2+S^2f}) }I\,.
}
If this function is finite valued, then it satisfies the main inequality.
\end{theorem}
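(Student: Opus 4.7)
The plan is to prove the main inequality for $\bfU$ by the standard \emph{concatenation} (splicing) argument: given near-optimal test functions on the two halves of $I$, glue them into a test function on $I$ that witnesses the concavity inequality.

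First, since the class $\cT(I)$, the average of $f$, and the ratio $\av{O(f,\sqrt{q^2+S^2f})}{I}$ all transform covariantly under affine rescaling of $I$, the function $\bfU$ does not depend on the choice of the underlying interval; so I am free to work with an interval $I$ split as $I=I_-\cup I_+$, and to realize near-optimal test functions on each half by rescaling ones on $I$ itself. Fix $\eps>0$, choose $f^{\pm}\in\cT(I_{\pm})$ with $\av{f^{\pm}}{I_{\pm}}=p\pm a$ satisfying
\[
\av{O\bigl(f^{\pm},\sqrt{(q^2+a^2)+S^2f^{\pm}}\bigr)}{I_{\pm}} \;>\; \bfU\!\bigl(p\pm a,\sqrt{q^2+a^2}\bigr)-\eps,
\]
and define $f\in\cT(I)$ by $f|_{I_{\pm}}=f^{\pm}$.

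The key computation is that $f$ is a competitor for $\bfU(p,q)$ and its square function splits in just the right way. Indeed $\av fI=\tfrac12(p+a)+\tfrac12(p-a)=p$, and the only Haar coefficient of $f$ on $I$ not already accounted for in $f^+$ or $f^-$ is $\Delta_I f$, which equals $a(\mathbf 1_{I_+}-\mathbf 1_{I_-})$; hence $(\Delta_I f)^2\equiv a^2$ on $I$. For $x\in I_{\pm}$ the smaller-scale Haar terms of $f$ coincide with those of $f^{\pm}$, so
\[
(Sf)^2(x) \;=\; a^2 + (Sf^{\pm})^2(x),\qquad x\in I_{\pm}.
\]
Consequently
\[
\av{O(f,\sqrt{q^2+S^2f})}{I}
=\tfrac12\av{O\bigl(f^+,\sqrt{(q^2+a^2)+S^2f^+}\bigr)}{I_+}
+\tfrac12\av{O\bigl(f^-,\sqrt{(q^2+a^2)+S^2f^-}\bigr)}{I_-}.
\]

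Since $\av fI=p$, the left-hand side is bounded above by $\bfU(p,q)$ by the very definition \eqref{fatUeq}, while each term on the right is, by the choice of $f^{\pm}$, within $\eps$ of $\bfU(p\pm a,\sqrt{q^2+a^2})$. Multiplying by $2$ and letting $\eps\to 0^+$ yields
\[
2\bfU(p,q)\;\ge\;\bfU\!\bigl(p+a,\sqrt{a^2+q^2}\bigr)+\bfU\!\bigl(p-a,\sqrt{a^2+q^2}\bigr),
\]
which is precisely the main inequality \eqref{MISq}. The only real subtlety is the splicing algebra, namely verifying that the top-level Haar oscillation contributes exactly $a^2$ to $S^2f$ pointwise on both halves so that the parameter $q$ is correctly updated to $\sqrt{q^2+a^2}$; the hypothesis that $\bfU$ is finite-valued is what permits us to choose the $\eps$-optimizers $f^{\pm}$ in the first place.
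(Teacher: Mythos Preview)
Your proof is correct and follows essentially the same concatenation argument as the paper: pick near-optimal test functions $f^{\pm}$ on the halves $I_{\pm}$, glue them into $f$ on $I$, observe that the top Haar term contributes exactly $a^2$ to $S^2f$ so that $q^2+S^2f = (q^2+a^2)+S^2f^{\pm}$ on $I_{\pm}$, and pass to the limit in $\eps$. Your write-up is in fact a bit more explicit than the paper's in spelling out the square-function bookkeeping and the role of the finiteness hypothesis.
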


Now we wish to formulate results that can be considered as converse to Theorem \ref{misq1}.
They concern the obstacle problem for \eqref{MISq}.

As was already mentioned, by this we understand finding $U$ satisfying \eqref{MISq} and majorizing a
 given function (obstacle) $O\colon \bR\times \bR_+\to \bR$. It turns out that one can 
 give ``simple" conditions necessary and sufficient for the solvability of the obstacle problem.

\begin{theorem}
\label{misq2}
Let an obstacle function $O$, and  a function $F\colon\bR\to \bR$ satisfying 
$F(p)\ge O(p, 0)$ be given.  A finite valued function $U$   satisfying
\begin{itemize}
\item main inequality \eqref{MISq}
\item $U\ge O$   
\item  $U(p, 0)\le F(p)$ 
\end{itemize} 
exists if and only if
\begin{equation}
\label{OSq}
\av{O(f, Sf)}I \le F(\av fI)\,,\quad \forall  f\in \cT\,.
\end{equation}
\end{theorem}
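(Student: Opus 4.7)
\textbf{Proof plan for Theorem \ref{misq2}.}

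The necessity direction is an immediate application of Corollary \ref{misq1cor}: assuming a finite-valued $U$ exists satisfying the three bulleted conditions, for any $f\in\cT$,
\[
\av{O(f,Sf)}I \le \av{U(f,Sf)}I \le U(\av fI,0) \le F(\av fI),
\]
so \eqref{OSq} holds.

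For sufficiency, I would take as candidate the function $\bfU$ from Theorem \ref{fatU},
\[
\bfU(p,q) = \sup_{\genfrac{}{}{0pt}{}{f\in\cT(I)}{\av fI=p}} \av{O(f,\sqrt{q^2+S^2f})}I.
\]
Three of the four properties are almost free. The inequality $\bfU(p,0)\le F(p)$ is exactly \eqref{OSq}. The inequality $\bfU\ge O$ comes from plugging in the constant test function $f\equiv p$, for which $Sf=0$ and the integrand becomes $O(p,q)$. And, once finiteness is established, Theorem \ref{fatU} hands us the main inequality \eqref{MISq} directly.

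The one step requiring work, and the principal obstacle, is proving that $\bfU(p,q)$ is finite for $q>0$. The idea is to absorb the ``initial charge'' $q$ into a single Haar coefficient by extending test functions to a doubled interval. Given $f\in\cT(I)$ with $\av fI=p$, let $I^*$ be a sibling of $I$ and $J=I\cup I^*$. Define $\tilde f$ on $J$ by $\tilde f|_I=f$ and $\tilde f|_{I^*}\equiv v$ for a constant $v$ to be chosen. A direct computation with the Haar function $h_J$ shows that the Haar coefficient at scale $J$ contributes $((p-v)/2)^2$ to $S^2\tilde f$ on all of $J$, so for $x\in I$,
\[
S^2\tilde f(x) = \left(\tfrac{p-v}{2}\right)^{\!2} + S^2 f(x),\qquad S\tilde f(x)=q \text{ on }I^*,
\]
provided we pick $v=p-2q$ (the other choice $v=p+2q$ works symmetrically). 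Then $\av{\tilde f}J=p-q$, and applying the assumed inequality \eqref{OSq} to $\tilde f$ on $J$ yields
\[
\tfrac12\av{O(f,\sqrt{q^2+S^2f})}I + \tfrac12\, O(p-2q,q) \le F(p-q),
\]
so
\[
\bfU(p,q) \le 2F(p-q) - O(p-2q,q) < \infty.
\]
Once this finite majorant for $\bfU$ is in hand, Theorem \ref{fatU} applies, and the three checks above complete the construction of the required $U=\bfU$. The main difficulty is precisely this extension trick; the rest is bookkeeping and an appeal to the already-proved Theorem \ref{fatU}.
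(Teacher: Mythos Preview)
Your proposal is correct and follows the same route as the paper: both take $\bfU$ as the candidate for sufficiency, check $\bfU\ge O$ via constant test functions, check $\bfU(p,0)\le F(p)$ directly from \eqref{OSq}, and handle necessity by the chain $\av{O(f,Sf)}I\le\av{U(f,Sf)}I\le U(\av fI,0)\le F(\av fI)$. Your explicit doubling construction for finiteness at $q>0$ is exactly an unwinding of the paper's one-line appeal to \eqref{MISq} at the point $(p-q,0)$ with step $a=q$ (both give $\bfU(p,q)\le 2F(p-q)-O(p-2q,q)$), and in fact your version is cleaner, since it sidesteps the paper's apparent circularity of invoking Theorem~\ref{fatU} before finiteness is in hand.
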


It will be especially important to use this result with one special $F$: $F=0$.
\begin{theorem}
\label{misq3}
Given an obstacle function $O$\textup, to find $U$  satisfying main inequality \eqref{MISq} and such that $U\ge O$  and  $U(p, 0)\le 0$\textup, it is necessary and sufficient to have
\begin{equation}
\av{O(f, Sf)}I \le 0,\qquad \forall  f\in \cT\,.
\end{equation}
\end{theorem}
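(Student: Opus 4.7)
The plan is to deduce Theorem~\ref{misq3} as the special case $F\equiv 0$ of Theorem~\ref{misq2}. For necessity, suppose such a $U$ exists. Applying Corollary~\ref{misq1cor} together with $O\le U$ and the third bullet $U(\,\cdot\,,0)\le 0$ yields, for every $f\in\cT(I)$,
\[
\langle O(f, Sf)\rangle_I \;\le\; \langle U(f, Sf)\rangle_I \;\le\; U(\langle f\rangle_I, 0) \;\le\; 0,
\]
which is the required inequality.

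For sufficiency, I would apply Theorem~\ref{misq2} with the constant $F\equiv 0$. The auxiliary hypothesis $F(p)\ge O(p, 0)$ of that theorem is automatic under the assumption of Theorem~\ref{misq3}: testing on the constant competitor $f\equiv p\cdot\mathbf{1}_I$ (for which $Sf\equiv 0$) gives $O(p, 0)\le 0 = F(p)$. The remaining condition $\langle O(f, Sf)\rangle_I \le F(\langle f\rangle_I) = 0$ is just the hypothesis of Theorem~\ref{misq3} verbatim. Theorem~\ref{misq2} then delivers a $U$ with the main inequality, $U\ge O$, and $U(\cdot, 0)\le F\equiv 0$, which is exactly what is asked for.

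The substantive work therefore lies in Theorem~\ref{misq2} itself, whose sufficiency direction should proceed by taking the candidate $\bfU$ of Theorem~\ref{fatU}: this function automatically dominates $O$ (take $f\equiv p\cdot\mathbf{1}_I$) and, once known to be finite, satisfies the main inequality by Theorem~\ref{fatU}. The main obstacle is therefore showing that the supremum defining $\bfU(p, q)$ is finite for $q>0$. I would handle this by embedding any competitor $f\in\cT(I)$ with $\langle f\rangle_I = p$ as the right half of an extension $F\in\cT(\widetilde I)$ on a doubled interval $\widetilde I$, taking $F\equiv p-2q$ on the left half so that the single outermost Haar oscillation has amplitude exactly $q$. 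Then $SF\equiv q$ on the left half while $(SF)^2 = q^2 + (Sf)^2$ on the right, and the hypothesis $\langle O(F, SF)\rangle_{\widetilde I}\le 0$ reads
\[
\tfrac{1}{2}\, O(p-2q, q) + \tfrac{1}{2}\,\langle O(f, \sqrt{q^2 + S^2 f})\rangle_I \;\le\; 0,
\]
yielding the uniform bound $\bfU(p, q)\le -O(p-2q, q)<\infty$ and closing the argument.
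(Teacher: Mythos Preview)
Your reduction of Theorem~\ref{misq3} to Theorem~\ref{misq2} with $F\equiv 0$ is exactly the paper's approach; the paper states Theorem~\ref{misq3} immediately after Theorem~\ref{misq2} as ``this result with one special $F$: $F=0$'' and gives no separate proof. Your observation that the side hypothesis $F(p)\ge O(p,0)$ is automatic by plugging in the constant competitor $f\equiv p\cdot\mathbf{1}_I$ is the right bridge, and your necessity argument matches the paper's ``only if'' verbatim.

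Where you diverge from the paper is in the supplementary third paragraph, where you sketch the finiteness of $\bfU(p,q)$ for $q>0$ inside Theorem~\ref{misq2}. The paper's route is: first $\bfU(p,0)\le F(p)<\infty$ by hypothesis, then invoke Theorem~\ref{fatU} for the main inequality, and finally propagate finiteness from $q=0$ to all $q$ via~\eqref{MISq}. This is slightly awkward because Theorem~\ref{fatU} as stated assumes finiteness in advance (though its proof does not actually use it). Your doubling construction---extending $f$ to $\widetilde I$ with a constant left half at height $p-2q$ so that the new top Haar jump has size exactly $q$---gives the direct bound $\bfU(p,q)\le -O(p-2q,q)$ from the hypothesis alone, with no circularity and no appeal to the main inequality. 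It is a cleaner and more self-contained argument than the paper's; the paper's version has the minor advantage of reusing Theorem~\ref{fatU} rather than introducing a new construction.
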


\begin{proof}[Proof of theorem \ref{misq1}]
Below by $\bE_k$ we denote  the expectation with  respect to $\sigma$-algebra generated by dyadic intervals of family $\cD_k$.
We first prove Theorem \ref{misq1}. Let $f\in \cT$, and let $N$ be such that $f$ is constant on each $J\in \cD_N(I)$.
Let us consider two siblings $\ell_+, \ell_-\in \cD_N(I)$ with the same father $\ell\in \cD_{N-1}(I)$. 

Denote  $p\df\av f\ell$ and let $\av f{\ell_+}= p+a)$,  then $\av f{\ell_-}= p-a$, and $f(x) =p\pm a$
for all $x\in \ell_\pm$ correspondingly.  Notice that  for all $x\in \ell$, $|\Delta\cii \ell f(x)|=|a|$,
and put $q_1\df   \sqrt{S^2f(x)-a^2}$, where $x\in \ell_\pm$ {\bf(}the value  $Sf(x)$  is the same for all $x\in\ell${\bf)}.
By the main inequality we have
\begin{align*}
&\int_{\ell_+} \!U(f(x), \sqrt{q^2+S^2f(x)}) \,dx + \int_{\ell_-} \!U(f(x), \sqrt{q^2+S^2f(x)}) \,dx =
\\
&|\ell|\left(\half U(p+a, \sqrt{q^2+a^2 +q_1^2}) + \half U(p+a, \sqrt{q^2+a^2 +q_1^2})\right)\le 
\\
&|\ell| U (p, q) = \int_\ell U(f_1(x), \sqrt{q^2+S^2f_1(x)})\, dx,
\end{align*}
where $f_1\df  \bE_{N-1} f$. We can continue now  by recursion. We denote $f_k\df  \bE_{N-k} f$, $ k=1 \dots N$. So $f_N(x)= \bE_0 f= \av fI{\bf 1}\cii I$. Notice that
$S f_N=0$ identically, and after repeating the above recursion  $N+1$ times we come to
\begin{equation}
\label{USq2}
\int_I U(f(x), \sqrt{q^2+S^2f(x)})\, dx \le |I| U(\av fI, q),
\end{equation}
which is the claim of the theorem.
\end{proof}

\begin{proof}[Proof of Theorem \ref{fatU}]
It is clear by its definition and by rescaling, that $\bfU$ does not depend on the interval $I$, where test functions are defined. 
Therefore, given the data $(p+a, \sqrt{a^2+q^2})$, we can find a function $f_+$ optimizing \hbox{$\bfU(p+a, \sqrt{a^2+q^2})$}
up to $\eps$, and we can think as well that it lives on $I_+$. Similarly, given the data $(p-a, \sqrt{a^2+q^2})$, 
we can find a function $f_-$ optimizing $\bfU(p-a, \sqrt{a^2+q^2})$ up to $\eps$, and we can think as well that it lives on $I_-$.

Concatenate functions $f_\pm$ on $I_\pm$ to the following function:
$$
f(x)= \begin{cases}  f_+(x), \,\,x\in I_+\\
f_-(x), \, \,x\in I_-
\end{cases}
$$
Since $\av fI=p$, we have
\begin{align*}
&\bfU(p, q) \ge  \av{O(f, \sqrt{q^2+S^2f}) }I 
\\
&=\half  \av{O(f, \sqrt{q^2+S^2f})  }{I_+} +  
\half  \av{O(f, \sqrt{q^2+S^2f}) }{I_-}
\\
&=\half  \av{O(f_+, \sqrt{q^2+a^2+S^2f_+) }}{I_+} + \half  \av{O(f_-, \sqrt{q^2+a^2+S^2f_+}) }{I_-}
\\
&\ge \half  \bfU(p+a, \sqrt{a^2+q^2})-\eps +  \half  \bfU(p+a, \sqrt{a^2+q^2})-\eps\,.
\end{align*}
As $\eps$ is an arbitrary positive number we are done.
\end{proof}

Now we prove Theorem \ref{misq2}.

\begin{proof}
First we prove the ``if" part. We are given an obstacle $O$ and a function $F$ such that $F(p)\ge O(p, 0)$. We defined
$$
\bfU(p, q) =  \sup_{\genfrac{}{}{0pt}{}{f\in \cT(I)}{\av fI=p}} \av{O(f, \sqrt{q^2+S^2f}) }I\,.
$$
It is obvious that $\bfU(p, q)\ge O(p, q)$, one just plugs the constant function $f= p{\bf 1}\cii I$.

It is also clear that $\bfU(p, 0)\le F(p)$. Indeed,
$$
\bfU(p, 0) =\sup_{\genfrac{}{}{0pt}{}{f\in \cT(I)}{\av fI=p}} \av{O(f, Sf)}I \le F(\av fI) = F(p)
$$
by  assumption \eqref{OSq}.   Hence $\bfU(p, 0)$ is finite valued. 

The fact that function $\bfU$ defined as above satisfies the main inequality ~\eqref{MISq} follows from Theorem \ref{fatU}.
 Then by \eqref{MISq} it is finite valued.

Now we prove the ``only if " part. We need to prove that
$$
\av{O(f, Sf)}I \le F(\av{f}I)
$$
if there exits a  majorant $U$ of $O$ satisfying the main inequality and satisfying $U(p, 0) \le F(p)$.
This is easy:
$$
\av{O(f, Sf)}I \le \av{U(f, Sf)}I\le U(\av fI, 0)\le  F(\av fI),
$$
where the second inequality follows from Corollary \ref{misq1cor} we have
\end{proof}

\medskip

The following theorem sums up the results of this section.
\begin{theorem}
\label{minUmaxU}
There exists a finite valued function $U$ majorizing $O$ and satisfying the main inequality if and only if $\bfU$ from \eqref{fatUeq}  is finite valued.
Moreover, if $\bfU$ is finite valued,
then the  infimum of functions $U$ majorizing $O$ and satisfying the main inequality is equal to $\bfU$ .
\end{theorem}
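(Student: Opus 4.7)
The plan is to derive both assertions from Theorems \ref{misq1} and \ref{fatU} in one short argument, splitting the proof into two halves that interlock.

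First I would verify that any finite-valued majorant $U$ of $O$ satisfying \eqref{MISq} dominates $\bfU$ pointwise. Fix $(p,q) \in \bR \times \bR_+$. For any test function $f \in \cT(I)$ with $\av{f}{I} = p$, Theorem \ref{misq1} (applied with parameter $q$) gives
\[
\av{O(f, \sqrt{q^2 + S^2 f})}{I} \;\le\; \av{U(f, \sqrt{q^2 + S^2 f})}{I} \;\le\; U(p, q),
\]
and taking the supremum over admissible $f$ yields $\bfU(p, q) \le U(p, q)$. This half simultaneously establishes the ``only if'' direction of the first assertion (the existence of any such $U$ forces $\bfU$ to be finite valued) and supplies one half of the equality asserted in the ``moreover'' clause.

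Next I would show that when $\bfU$ is finite valued, it is itself an admissible competitor in the infimum. Theorem \ref{fatU} directly gives that $\bfU$ satisfies \eqref{MISq}. To verify $\bfU \ge O$, I would plug the constant test function $f \equiv p \,\mathbf{1}_I$ into the supremum in \eqref{fatUeq}: then $Sf \equiv 0$ and $\av{f}{I} = p$, so the averaged quantity reduces to $O(p, q)$, giving $\bfU(p, q) \ge O(p, q)$. Combined with the previous paragraph this produces the ``if'' direction of the first assertion and upgrades the inequality $\bfU \le \inf U$ to an equality, confirming the ``moreover'' clause.

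There is no real obstacle; the theorem is essentially a bookkeeping consequence of the two previous theorems, along the same lines as the proof of Theorem \ref{misq2}. The only point to watch is that Theorem \ref{fatU} is invoked strictly under the finiteness hypothesis on $\bfU$, which is exactly where it is valid.
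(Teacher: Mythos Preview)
Your proposal is correct and follows essentially the same route as the paper: use Theorem~\ref{misq1} together with $U\ge O$ to conclude that every admissible $U$ dominates $\bfU$, and use Theorem~\ref{fatU} plus the constant test function to see that $\bfU$ itself is admissible when finite. The only cosmetic difference is the order of the two halves; your version is in fact slightly more explicit than the paper in singling out the verification of $\bfU\ge O$.
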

\begin{proof}
We already saw in Theorem \ref{fatU} that $\bfU$ from \eqref{fatUeq} (if finite valued) is one of those functions $U$ that majorize $O$ and satisfy the main inequality. 

On the other hand, for any function $U$ that majorize $O$ and satisfy the main inequality we know from Theorem \ref{misq1} that for any test function $f$ and any non-negative $q$ the following holds
$$
U(\av fI, q) \ge \av{U(f, \sqrt{q^2+S^2f})}I \ge \av{O(f, \sqrt{q^2+S^2f})}I\,. 
$$
Take now the supremum over test functions in the right hand side. By definition we obtain $\bfU(\av fI, q)$. Theorem is proved.
\end{proof}

We will consider the following examples.

\medskip

\noindent{\bf Example 0.} Davis function that gives  the proof of \eqref{Davis01} for $\alpha\ge 2$. 
Here the obstacle function will be
\begin{equation}
\label{Dobst}
O_0(p, q)=c_\alpha^\alpha |q|^\alpha- |p|^\alpha,
\end{equation}
where the best value of $c_\alpha$ was found by Davis \cite{Davis}.

\noindent{\bf Example 1.} Bollob\'as function. Here the obstacle function will be
\begin{equation}
\label{Bobst1}
O_1(p, q) = {\bf 1}_{q\ge 1} - C |p|,
\end{equation}
where the best value of $C$ was suggested by B.~Bollob\'as~\cite{Bollobas}. This was verified  
by A.~Os\c ekowski~\cite{Os2009}, see also~\cite{HoIvV}.

\noindent{\bf Example 2.} Bollob\'as function. Here the obstacle function will be
\begin{equation}
\label{Bobst2}
O_2(p, q) = {\bf 1}_{p^2+q^2\ge 1} - C |p|,
\end{equation}
where the best value of $C$ was suggested by B.~Bollobas~\cite{Bollobas} and also verified 
by A.~Os\c ekowski~\cite{Os2009}, see also~\cite{HoIvV}.

\noindent{\bf Example 3.}  Bellman function associated with  the Chang--Wilson--Wolff theorem.
\begin{equation}
\label{ChWWobst}
O_3(p, q;\la) = {\bf 1}_{[\la, \infty)}(p){\bf 1}_{[0, 1]}(q)\,.
\end{equation}

Function $U$ is not fully known in the case. It is ``almost" found in \cite{NaVaVo}.

\subsection{Davis obstacle problem}
\label{Dobst_sec}

In this section we want to find the minimal value $c_\alpha$ for which there exists a function 
$\bfU\colon \mathbb{R}^2\to \mathbb{R}$ that solves the problem with the obstacle function of Example~0, i.\,e.,
\eq[bfU6]
{
\bfU(p, q)\df\sup\{\av{c_\alpha^\alpha\big(q^2+(Sf)^2\big)^{\alpha/2}-|f|^\alpha}{I}\colon\av fI=p\}\,.
}
In other words, we want to find the heat envelope of $O_0$.
Let $\alpha\geq2$ and let  $\beta=\frac{\alpha}{\alpha-1}\leq2$  be the conjugate exponent of $\alpha$. 
Let  
\begin{align}\label{series}
N_{\alpha }(x)&\df{}_1F_1\left(-\frac\alpha2,\frac12,\frac{x^2}2\right)\nonumber 
\\
&=\sum_{m=0}^{\infty}\frac{(-2x^{2})^{m}}{(2m)!}\frac{\alpha}2\left(\frac{\alpha}2-1\right)\cdots
\left(\frac{\alpha}2-m+1\right)
\\
&=1-\frac\alpha2x^2+\frac\alpha{12}\left(\frac\alpha2-1\right)x^4\ldots\nonumber 
\end{align}
be the confluent hypergeometric function.  $N_{\alpha }(x)$ satisfies the Hermite differential equation 
\begin{align}\label{hermit}
N''_{\alpha }(x)-xN'_{\alpha }(x)+\alpha N_{\alpha}(x)=0 \quad \text{for} \quad  x\in \mathbb{R}
\end{align}
with initial conditions $N_{\alpha}(0)=1$ and $N'_{\alpha}(0)=0$. Let $c_\alpha$ \label{dex_u_alpha} be the smallest 
positive zero of $N_{\alpha}$. 

Set 
\begin{equation}
\label{dex_u}
u_{\alpha}(x) \df  
\begin{cases}
-\dfrac{\alpha c_\alpha^{\alpha-1}}{N'_{\alpha}(c_\alpha)} N_{\alpha}(x), & 0\leq |x|\leq c_\alpha;\\[10pt]
\ \ c_\alpha^{\alpha}-|x|^{\alpha}, & c_\alpha \leq |x|.
\end{cases}
\end{equation}
Clearly $u_{\alpha}(x)$ is $C^{1}(\mathbb{R}) \cap C^2(\mathbb{R}\setminus{\{c_\alpha\}})$ 
smooth even concave function.  The concavity follows from Lemma~\ref{root} on the 
page~\pageref{root} and the fact that  $N'_{\alpha}(c_\alpha) <0$. Finally we define 
\eq[upq]
{
U(p,q) \df 
\begin{cases} 
|q|^{\alpha} u_{\alpha}\left( \frac{|p|}{|q|}\right), &\quad q\neq 0,
\\
\quad-|p|^{\alpha}, &\quad q=0.
\end{cases}
}
In this section we are going to prove the following result.
\begin{theorem}
\label{bfU6_th}
Function $\bf U$ from \eqref{bfU6} is equal to $U$ written above in \eqref{upq}.
\end{theorem}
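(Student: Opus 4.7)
The plan is to apply Theorem \ref{minUmaxU}, which identifies $\bfU$ as the pointwise smallest majorant of the obstacle $O_0(p, q) = c_\alpha^\alpha q^\alpha - |p|^\alpha$ that satisfies the main inequality \eqref{MISq}. Thus the argument splits into showing $\bfU \le U$, by verifying that the concrete $U$ from \eqref{upq} is itself an admissible competitor, and $\bfU \ge U$, by constructing Haar martingales that realize $U$ in the limit.

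First I would check the majorization $U \ge O_0$. In the outer region $|p| \ge c_\alpha q$ equality holds by construction. In the inner region it reduces to the one-variable claim $u_\alpha(x) \ge c_\alpha^\alpha - |x|^\alpha$ on $[-c_\alpha, c_\alpha]$, which follows from the $C^1$-matching at $x = \pm c_\alpha$ enforced by the normalization in \eqref{dex_u} (giving $u'_\alpha(c_\alpha^-) = -\alpha c_\alpha^{\alpha - 1}$), together with the concavity of $u_\alpha$ on $[-c_\alpha, c_\alpha]$ and of $c_\alpha^\alpha - |x|^\alpha$.

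Next I would verify the main inequality \eqref{MISq}. With the substitution $V(p, t) := U(p, \sqrt{t})$, \eqref{MISq} becomes $2V(p, t) \ge V(p + a, t + a^2) + V(p - a, t + a^2)$, i.e., $V(P_n, T_n)$ is a supermartingale along the $\pm a$ random walk with $T_n - T_{n-1} = a_n^2$. The Hermite equation \eqref{hermit} is designed precisely so that $V_t + \half V_{pp} \equiv 0$ on the inner region, while in the outer region a direct computation (using $\alpha \ge 2$) gives $V_t + \half V_{pp} \le 0$; combined with the $C^1$-gluing across the wedge $|p| = c_\alpha q$, this leads to the probabilistic representation
$$
U(p, q) \;=\; \bE\!\left[\,O_0\!\left(p + B_\tau,\, \sqrt{q^2 + \tau}\,\right)\right],
$$
where $B$ is a standard Brownian motion and $\tau$ is the first hitting time of the wedge. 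The finite-difference form of \eqref{MISq} then follows by optional stopping applied to the martingale $t \mapsto V(p + B_t, q^2 + t)$ at the exit time of $[-a, a]$ by $B$, with a short case analysis depending on whether the pair $(p \pm a, \sqrt{q^2 + a^2})$ lies inside or outside the wedge.

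For the lower bound $U \le \bfU$, the outer region is immediate: $f \equiv p\,\mathbf{1}_I$ has $Sf = 0$ and achieves $c_\alpha^\alpha q^\alpha - |p|^\alpha = U(p, q)$. In the inner region I would discretize the optimal Brownian trajectory above by a dyadic Haar martingale, bisecting at each stage into $p \pm a_n$ with $a_n \to 0$ chosen so that the induced walk $(P_n, \sqrt{q^2 + S^2 f_n})$ tracks the Brownian path until it meets the wedge and is then frozen. The Hermite equation forces the $O(a_n^2)$ per-step deficit in \eqref{MISq} to vanish, so the accumulated error over the $O(a_n^{-2})$ bisection steps is $O(a_n^2) \to 0$, giving $\av{O_0(f_n, \sqrt{q^2 + S^2 f_n})}I \to U(p, q)$. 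The main obstacle I expect is the passage from the infinitesimal Hermite identity to the finite-difference form of \eqref{MISq}: controlling the higher-order terms uniformly in $a$ and handling the matching across the gluing curve $|p| = c_\alpha q$ is the technical heart of the argument.
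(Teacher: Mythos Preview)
Your overall strategy---invoking Theorem~\ref{minUmaxU} and splitting into $\bfU\le U$ and $\bfU\ge U$---matches the paper exactly. But there are two genuine gaps in the upper-bound half, and your lower-bound half takes a different route from the paper.

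\textbf{Obstacle condition.} Saying that $u_\alpha$ and $g(x)=c_\alpha^\alpha-|x|^\alpha$ are both concave and $C^1$-matched at $\pm c_\alpha$ does \emph{not} yield $u_\alpha\ge g$: two concave functions tangent at an endpoint can go either way on the interior. The paper instead shows $u_\alpha'\le g'$ on $[0,c_\alpha]$ by proving that $-N_\alpha'(x)/x^{\alpha-1}$ is decreasing (via $N_\alpha'''=-\alpha N_{\alpha-2}'\ge 0$, Lemma~\ref{root}); integrating from $c_\alpha$ down then gives $u_\alpha\ge g$.

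\textbf{Main inequality.} Your optional-stopping argument stops $t\mapsto V(p+B_t,\,q^2+t)$ at the exit time $\sigma$ of $[-a,a]$ and obtains $V(p,q^2)\ge \bE\,V(p\pm a,\,q^2+\sigma)$. But $\sigma$ is random with $\bE[\sigma\mid B_\sigma=\pm a]=a^2$, so to pass to $V(p\pm a,\,q^2+a^2)$ you need Jensen, i.e., convexity of $t\mapsto V(p,t)=U(p,\sqrt t)$. This is exactly the paper's Lemma~\ref{blema}, proved from $N_\alpha''=-\alpha N_{\alpha-2}$ and the monotonicity of $x^{-\gamma}N_\gamma(x)$; with it in hand, Lemma~\ref{barko} (Barthe--Maurey) closes the gap. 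Your ``case analysis on inside/outside the wedge'' does not substitute for this convexity step.

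\textbf{Lower bound $\bfU\ge U$.} Here you genuinely diverge from the paper. You propose to construct near-extremal test functions by discretizing the optimal stopped Brownian path with step sizes $a_n\to 0$, using that the Hermite equation kills the $a^2$ term in the Taylor expansion of the defect, so the per-step error is $O(a_n^4)$ and the total over $O(a_n^{-2})$ steps is $O(a_n^2)$. This is plausible in spirit but delicate: the number of steps until the discrete walk reaches the wedge is random, the freezing rule must be made precise, and uniform control of the remainder across the domain (including near the gluing curve) needs care. The paper avoids all of this by an ODE uniqueness argument: homogeneity reduces to showing $\bell(x)=\bfU(x,1)$ equals $b(x)=U(x,1)$ on $[-c_\alpha,c_\alpha]$; the main inequality forces $\bell''-x\bell'+\alpha\bell\le 0$ in the distributional sense, and Lemma~\ref{ode0} (a Wronskian computation against the known solution $u_\alpha$) then shows any nonnegative concave $\bell$ with $\bell(\pm c_\alpha)=0$, $\bell'(c_\alpha)$ finite, satisfying this inequality must be a multiple of $b$. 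The boundary derivative is pinned down by sandwiching $c_\alpha^\alpha-|x|^\alpha\le\bell\le b$. This is shorter and avoids any approximation scheme.
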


For the first time the function $U(p,q)$ appeared in~\cite{Davis}. Later it was also used 
in~\cite{Wang, Wang2} in the form  $\widetilde{u}(p,t)=U(p,\sqrt{t})$, $t\geq 0$. 
Since want to prove that
$$
U=\bfU\,,
$$
at first we will verify  the following properties:
\eq[obstacle6]{
U(p,q) \geq |q|^{\alpha} c_\alpha^{\alpha} - |p|^{\alpha}, \quad (p,q) \in\mathbb{R}^2, 
}
\eq[neravenstvo]{
2U(p,q) \geq U(p+a,\sqrt{a^{2}+q^{2}})+U(p-a, \sqrt{a^2+q^2}), \,(p,q,a) \in\mathbb{R}^3.
}

When these two properties get proved, Theorem \ref{minUmaxU} ensures that
\eq[bfUU]
{
\bfU \le U\,.
}
This inequality is the most difficult part of Theorem ~\ref{bfU6_th}.

We called~\eqref{obstacle6} the {\em obstacle condition}, and \eqref{neravenstvo} 
the {\em main inequality}. 
The infinitesimal form  of~\eqref{neravenstvo} is
\eq[infini]
{
\frac1qU_{q}+U_{pp}\leq0\,,
}
which follows from the main inequality by expanding it into Taylor's series with respect to $a$ near  the origin and comparing the second order terms.

\bigskip

First we  check \eqref{infini}.  On  domain $p/ q\in (-c_\alpha,c_\alpha)$, $q>0,$ this follows 
from~\eqref{upq} and the first line of~\eqref{dex_u}. Moreover, on this domain we have equality 
$U_q/q+U_{pp}=0$, which easily follows from~\eqref{hermit}. 
On the complementary domain, where $|p|\ge c_\alpha q$, we have
\begin{align*}
\frac1q U_q+U_{pp}&=\alpha (c_\alpha^\alpha q^{\alpha-2}- (\alpha -1) |p|^{\alpha-2})
\\
&=\alpha q^{\alpha-2}c_\alpha^{\alpha-2}\Big(c_\alpha^2-(\alpha -1)\big(\frac{|p|}{c_\alpha q}\big)^{\alpha-2}\Big)<0,
\end{align*}
because $\alpha\ge 2$ and, as we will see below in Lemma~\ref{root}, $c_\alpha \le 1$. In fact, we need more, 
we need also to check that in the sense of distributions~\eqref{infini} is also satisfied, 
but this calculation we leave for the reader.

Inequality~\eqref{infini} guarantees that 
$$
X_{t} = U(W(t),\sqrt{t}) \quad \text{is a supermartingale for} \quad t \geq 0.
$$  
In fact, using It\^o's formula, we get
$$
dX(t)= \frac1{2\sqrt{t}}\frac{\pd U}{\pd q}dt+\frac12\frac{\pd^2 U}{\pd p^2}dt+\frac{\pd U}{\pd p}dW(t),
$$
and therefore~\eqref{infini} implies that $dX(t)-\frac{\pd U}{\pd p}dW(t)\le0$, so $X(t)$ is 
a supermartingale.

Finally, the supermartingale property gives us the second inequality below
$$
\mathbb{E}(T^{\frac{\alpha}{2}}c_\alpha^{\alpha} - 
|B_{T}|^{\alpha}) \stackrel{\eqref{obstacle6}}{\leq} \mathbb{E}U(B_{T}, \sqrt{T}) \leq U(0,0)=0,
$$
which yields~\eqref{Davis01}. 

Now we are going to prove  that $U(p,q)$ is the minimal function with properties~\eqref{obstacle6} 
and~\eqref{neravenstvo}. 

\bigskip

The next step is to go from infinitesimal version \eqref{infini} to finite difference 
inequality~\eqref{neravenstvo}. For that we need several lemmas.

\begin{lemma}\label{root}
The minimal positive root $c_\alpha$ of $N_\alpha$ has the following properties.

{\rm 1)} The estimate $0<c_\alpha\leq 1$ is valid for  $\alpha \geq 2$.

{\rm 2)} $c_\alpha$ is decreasing in $\alpha>0$.

{\rm 3)}  $N'_{\alpha}(t)\leq 0, \,\, N''_{\alpha}(t) \leq 0$ on $[0, c_\alpha]$ for $\alpha>0$. 
\end{lemma}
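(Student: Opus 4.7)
The plan is to prove the three claims in the order \textup{(3)}, then \textup{(1)} partially, then \textup{(2)}, then complete \textup{(1)}. The main tool is the Hermite ODE~\eqref{hermit} itself, together with a Sturm-type Wronskian comparison for monotonicity in $\alpha$.

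For \textup{(3)}, I would first read off from~\eqref{hermit} and the initial conditions $N_\alpha(0)=1$, $N'_\alpha(0)=0$ that $N''_\alpha(0)=-\alpha<0$. The claim is that $N''_\alpha<0$ throughout $[0,c_\alpha)$, which then forces $N'_\alpha<0$ on $(0,c_\alpha]$ by integration. If instead some smallest $t_0\in(0,c_\alpha)$ satisfied $N''_\alpha(t_0)=0$, the previous negativity of $N''_\alpha$ on $[0,t_0)$ would give $N'_\alpha(t_0)<0$; but then the ODE evaluated at $t_0$ reads $N''_\alpha(t_0)=t_0N'_\alpha(t_0)-\alpha N_\alpha(t_0)$, and since $N_\alpha(t_0)>0$ (as $t_0<c_\alpha$), the right-hand side is strictly negative, a contradiction. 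At the endpoint, the ODE plus $N_\alpha(c_\alpha)=0$ gives $N''_\alpha(c_\alpha)=c_\alpha N'_\alpha(c_\alpha)<0$, completing \textup{(3)}.

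For the $\alpha=2$ case of \textup{(1)}, the finite product $\tfrac\alpha2(\tfrac\alpha2-1)\cdots(\tfrac\alpha2-m+1)$ in \eqref{series} vanishes for $m\ge2$, so $N_2(x)=1-x^2$ and $c_2=1$ directly. For \textup{(2)}, I would use a Sturm-type argument. Given $0<\alpha_1<\alpha_2$, define
\[
W(x) \df N_{\alpha_1}(x)N'_{\alpha_2}(x)-N_{\alpha_2}(x)N'_{\alpha_1}(x).
\]
Combining the two ODEs \eqref{hermit} for $N_{\alpha_1}$ and $N_{\alpha_2}$ gives
\[
\bigl(e^{-x^2/2}W(x)\bigr)'=-(\alpha_2-\alpha_1)\,e^{-x^2/2}N_{\alpha_1}(x)N_{\alpha_2}(x),
\]
with $W(0)=0$. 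If $c_{\alpha_2}\ge c_{\alpha_1}$, then $N_{\alpha_2}>0$ on $[0,c_{\alpha_1})$, making the right-hand side strictly negative on $(0,c_{\alpha_1})$, so integration forces $W(c_{\alpha_1})<0$. But a direct evaluation using $N_{\alpha_1}(c_{\alpha_1})=0$ gives $W(c_{\alpha_1})=-N_{\alpha_2}(c_{\alpha_1})N'_{\alpha_1}(c_{\alpha_1})\ge0$, since $N'_{\alpha_1}(c_{\alpha_1})<0$ by \textup{(3)} and $N_{\alpha_2}(c_{\alpha_1})\ge0$ by assumption. This contradiction shows $c_{\alpha_2}<c_{\alpha_1}$. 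Combined with $c_2=1$, this also yields $c_\alpha<1$ for all $\alpha>2$, finishing \textup{(1)}.

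The main obstacle is the Sturm-style comparison in \textup{(2)}: the transition from the differential form to the Wronskian identity is routine, but one must keep all signs straight and be careful in the borderline case $c_{\alpha_2}=c_{\alpha_1}$, which is ruled out by strict decrease of $e^{-x^2/2}W$ on the open interval $(0,c_{\alpha_1})$ (so $W(c_{\alpha_1})<0$ strictly), conflicting with $W(c_{\alpha_1})=0$ in that case. The role of \textup{(3)} in the comparison is precisely to guarantee $N'_{\alpha_1}(c_{\alpha_1})<0$, so that the endpoint sign of $W$ is controlled; this is why \textup{(3)} should be proved first.
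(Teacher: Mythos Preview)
Your proof is correct, and the overall architecture (compute $c_2=1$ explicitly, then run a Sturm/Wronskian comparison for monotonicity) matches the paper's. The comparison for part~(2) is essentially the same as the paper's: the paper passes to $G_\alpha(t)=e^{-t^2/4}N_\alpha(t)$ to put the equation in self-adjoint form and then invokes the Sturm principle, whereas you work with $N_\alpha$ directly and introduce the integrating factor $e^{-x^2/2}$; these are two packagings of the same computation.

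Where your argument genuinely differs is part~(3). The paper proves $N''_\alpha\le 0$ on $[0,c_\alpha]$ via the series identity $N''_\alpha=-\alpha N_{\alpha-2}$ together with a separate pointwise comparison $N_{\alpha-2}\ge N_\alpha\ge 0$ on $[0,c_\alpha]$ (itself established by a Wronskian argument). Your route is more economical: you argue directly from the ODE that a first zero $t_0$ of $N''_\alpha$ in $(0,c_\alpha)$ would force $N''_\alpha(t_0)=t_0N'_\alpha(t_0)-\alpha N_\alpha(t_0)<0$, a contradiction. This avoids both the series recurrence and the auxiliary ordering lemma, and it yields the strict inequality $N'_\alpha(c_\alpha)<0$ that you then feed into the Wronskian endpoint evaluation in~(2). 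The paper's approach, on the other hand, produces the pointwise inequality $N_{\alpha_2}\ge N_{\alpha_1}$ as a byproduct, which is not needed for the lemma itself but is of independent interest.
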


\begin{proof}
Consider $G_{\alpha}(t) \df e^{-t^{2}/4}N_{\alpha}(t)$. 
Notice that the zeros of $G_{\alpha}$ and $N_{\alpha}$ are the same.  It follows from (\ref{hermit}) that 
\begin{align}\label{hyper1}
G''_{\alpha} + \left(\alpha+\frac{1}{2}-\frac{t^{2}}{4} \right) G_{\alpha}=0, \quad G_{\alpha}(0)=1 \quad \text{and} \quad G'_{\alpha}(0)=0.
\end{align}
Besides we know that the solution is even. Consider the critical case $\alpha=2$. 
In this case $G_{2}(t)=e^{-t^{2}/4}(1-t^{2})$ and the smallest positive zero is $s_{2} = 1$. Therefore it follows from the 
Sturm comparison principle that $0<c_\alpha<1$ for $\alpha>2$ (see below). Moreover, the 
same principle applied to $G_{\alpha_{1}}$ and $G_{\alpha_{2}}$  with $\alpha_{1}> \alpha_{2}$ implies that $G_{\alpha_{1}}$ 
has a zero inside the interval $(-s_{\alpha_{2}}, s_{\alpha_{2}})$. Thus  we conclude that  $c_\alpha$ is decreasing in $\alpha$.  

To verify that $N'_{\alpha}, N''_{\alpha} \leq 0$ on $[0,c_\alpha]$, first we claim that
\begin{align*}
N_{\alpha_{2}} \geq N_{\alpha_{1}}  \quad \text{on} \quad [0, s_{\alpha_{1}}]
\end{align*}
for $\alpha_{1}>\alpha_{2}>0$. Indeed the proof works in the same way as 
the proof of Sturm's comparison principle. For the convenience of the reader we decided to include the argument. 
As before, consider $G_{\alpha_{j}} = e^{-t^{2}/4} N_{\alpha_{j}}$. It is enough to show that $G_{\alpha_{2}} \geq G_{\alpha_{1}}$ 
on $[0,s_{\alpha_{1}}]$. It follows from (\ref{hyper1}) that $G''_{\alpha_{2}}(0) > G''_{\alpha_{1}}(0)$. 
Therefore,  using the Taylor series expansion at the point $0$, we see that the claim is true at some 
neighborhood of zero, say $[0, \varepsilon)$ with $\varepsilon$ sufficiently small.
 Next we assume the contrary, i.e.,  that there is a point $a \in [\varepsilon, s_{\alpha_{1}}]$ 
 such that $G_{\alpha_{2}}\geq G_{\alpha_{1}}$ on $[0,a]$,  $G_{\alpha_{2}}(a)=G_{\alpha_{1}}(a)$ and $G'_{\alpha_{2}}(a)<G'_{\alpha_{1}}(a)$ 
 (notice that the case $G'_{\alpha_{2}}(a)=G'_{\alpha_{1}}(a)$,  
 by the uniqueness theorem for ordinary differential equations, 
 would imply that $G_{\alpha_{2}}=G_{\alpha_{1}}$ everywhere, which is impossible). Consider the Wronskian 
\begin{align*}
W=G_{\alpha_{1}}'G_{\alpha_{2}} - G_{\alpha_{1}} G'_{\alpha_{2}}.
\end{align*}
We have  $W(0) = 0$ and $W(a)=G_{\alpha_{1}}(a) (G'_{\alpha_{1}}(a) - G'_{\alpha_{2}}(a)) \geq  0$. On the other hand, we have 
\begin{align*}
W' = (\alpha_{2} -\alpha_{1}) G_{\alpha_{1}}G_{\alpha_{2}} < 0 \quad \text{on} \quad [0,a),
\end{align*}
which is a clear contradiction, and this proves the claim. 

It follows from (\ref{series}) that 
\begin{align}
\label{recurent}
N''_{\alpha} = -\alpha N_{\alpha-2},
\end{align}
 and inequalities  $N_{\alpha-2} \geq  N_{\alpha}\geq 0$ on $[0,c_\alpha]$ imply that 
 
$$
N''_{\alpha}\leq 0 \,\,\text{on}\,\, [0, c_\alpha]\,. 
$$ 
Since $N'_{\alpha}(0)=0$, and $N''_{\alpha}\leq 0$ on $[0, c_\alpha]$, 
we must have $N'_{\alpha}\leq 0$ on $[0,c_\alpha]$. 
\end{proof}

\bigskip

\begin{lemma}
\label{blema}
For any $p \in \mathbb{R}$\textup, the function 
\begin{align}\label{convsq}
t \mapsto U(p, \sqrt{t}) \quad \text{is convex for} \quad t\geq 0\,.
\end{align}
\end{lemma}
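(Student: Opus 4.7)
The plan is to verify convexity of $V(t) := U(p, \sqrt{t})$ piecewise in $t$ and then glue the pieces. By the evenness of $U$ in $p$ one may assume $p \geq 0$; the case $p = 0$ reduces immediately to $V(t) = u_\alpha(0)\, t^{\alpha/2}$, convex for $\alpha \geq 2$ since $u_\alpha(0) > 0$. For $p > 0$ the boundary between the two pieces of \eqref{upq} is crossed at $t_0 := p^2/c_\alpha^2$. On $[0, t_0]$ we sit in the outer region and $V(t) = c_\alpha^\alpha t^{\alpha/2} - p^\alpha$, which is convex for $\alpha \geq 2$. The substantive work is the inner region $[t_0, \infty)$.

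On $[t_0, \infty)$ one has $V(t) = t^{\alpha/2} u_\alpha(p/\sqrt{t})$. The Hermite equation \eqref{hermit} translates, by a direct chain-rule computation, into the backward heat equation
\begin{equation*}
\tilde U_t + \tfrac12 \tilde U_{pp} = 0 \qquad \text{in the inner region},
\end{equation*}
where $\tilde U(p, t) := U(p, \sqrt{t})$. Differentiating this identity once in $t$ and twice in $p$ and combining gives $V''(t) = \tilde U_{tt} = \tfrac14 \tilde U_{pppp} = \tfrac14\, t^{(\alpha - 4)/2}\, u_\alpha''''(p/\sqrt{t})$. Convexity on the inner region thus reduces to checking $u_\alpha''''(x) \geq 0$ for $x \in [0, c_\alpha]$.

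On $[0, c_\alpha]$, $u_\alpha$ is a positive multiple of $N_\alpha$ (the normalization constant in \eqref{dex_u} being positive by Lemma \ref{root}(3)), so it suffices to control $N_\alpha''''$. Applying the recursion \eqref{recurent} twice produces
\begin{equation*}
N_\alpha''''(x) \,=\, -\alpha N_{\alpha-2}''(x) \,=\, \alpha(\alpha-2)\, N_{\alpha-4}(x).
\end{equation*}
The prefactor is non-negative for $\alpha \geq 2$. For $N_{\alpha-4} \geq 0$ on $[0, c_\alpha]$ I split into cases. When $2 \leq \alpha \leq 4$ the index $\alpha - 4$ is non-positive, and inspection of the series \eqref{series} shows every coefficient of $N_\beta$ is non-negative for $\beta \leq 0$ (the factor $(-1)^m$ from $(-2x^2)^m$ cancels the factor $(-1)^m$from the product $\prod_{k=0}^{m-1}(\beta/2 - k)$ of negative numbers), so $N_{\alpha-4} \geq 1$ everywhere. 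When $\alpha > 4$, parts (2) and (3) of Lemma \ref{root} give $c_{\alpha-4} > c_\alpha$ and $N_{\alpha-4} \geq 0$ on $[0, c_{\alpha-4}]$, which contains $[0, c_\alpha]$.

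Finally, since $u_\alpha \in C^1(\mathbb R)$, $V$ is a $C^1$ function of $t$, and convexity on each of $[0, t_0]$ and $[t_0, \infty)$ together with $C^1$ matching at $t_0$ yields convexity on all of $[0, \infty)$. The main obstacle is the fourth-derivative step: recognizing that the backward heat structure forces convexity in $t$ to be equivalent to $u_\alpha''''(x) \geq 0$, and then tracking signs in the index-shifted series to handle the narrow window $2 \leq \alpha \leq 4$ in which Lemma \ref{root} does not directly apply to $N_{\alpha - 4}$.
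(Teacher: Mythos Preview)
Your proof is correct and closely related to the paper's argument, but the route differs in an instructive way. The paper computes the \emph{first} $t$-derivative: from the backward heat equation $\tilde U_t + \tfrac12 \tilde U_{pp} = 0$ and the recursion \eqref{recurent} it obtains $\tilde U_t = \tfrac{\alpha \kappa_\alpha}{2}\, t^{(\alpha-2)/2} N_{\alpha-2}(p/\sqrt t)$, and then shows this is increasing in $t$ by proving that $x \mapsto x^{-\gamma} N_\gamma(x)$ is decreasing on $(0, c_{\gamma+2})$ for $\gamma = \alpha - 2 \geq 0$; a one-line computation using \eqref{hermit} reduces that to $N_\gamma'' \leq 0$, which is part~(3) of Lemma~\ref{root}. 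You instead differentiate once more and go straight to $V'' = \tfrac14\, t^{(\alpha-4)/2} N_\alpha''''(p/\sqrt t)$, then apply the recursion \emph{twice} to reach $N_\alpha'''' = \alpha(\alpha-2) N_{\alpha-4}$. The price of this more direct computation is that Lemma~\ref{root} no longer covers the window $2 \leq \alpha \leq 4$ (where the index $\alpha - 4$ is non-positive), so you supply an ad~hoc series argument; the paper's first-derivative approach avoids this case split because only one index shift is needed and $\gamma = \alpha - 2 \geq 0$ stays in the range of Lemma~\ref{root}. Both arguments rest on the same structural observation---the backward heat equation plus the Hermite recursion---and your handling of the $C^1$ gluing at $t_0$ is clean and correct.
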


\begin{proof}
Without loss of generality, assume that $p\geq 0$. We recall that 
$U(p,\sqrt{t}) = t^{\alpha/2}u_{\alpha}(p/\sqrt{t})$.  Since $\alpha \geq 2$, the only 
interesting case to consider is when $p/\sqrt{t} <c_\alpha$ (otherwise $ t^{\alpha/2}$ is convex). 
In this case we have $U(p,\sqrt{t}) = \kappa_\alpha t^{\alpha/2} N_{\alpha}(p/\sqrt{t})$, 
where $\kappa_\alpha$ is a positive constant. In particular, by~\eqref{hermit} we have  
\hbox{$U(p,\sqrt{t})_{t}+\half U(p,\sqrt{t})_{pp}=0$}. 
Using~\eqref{recurent}, we obtain
\begin{align*}
U(p,\sqrt{t})_{t} = -\frac{U(p,\sqrt{t})_{pp}}{2} = -\frac{\kappa_\alpha}{2}t^{\frac{\alpha}{2}-1}N''_{\alpha}(p/\sqrt{t}) =
 \frac{\alpha\kappa_\alpha}{2}t^{\frac{\alpha-2}{2}}N_{\alpha-2}(p/\sqrt{t}).
\end{align*}
Therefore, it would be enough to show that for any $\gamma \geq 0$,  the function   $x^{-\gamma}N_{\gamma}(x)$
is decreasing  for $x \in (0,s_{\gamma+2})$. 
Differentiating, and using~\eqref{hermit} again, we obtain 
\begin{align*}
\frac{\mathrm{d}}{\mathrm{d}x} \, \left(\frac{N_{\gamma}(x)}{x^{\gamma}}\right) =\frac{N''_{\gamma}(x)}{x^{\gamma+1}},
\end{align*}
which  is nonpositive by Lemma~\ref{root}. 
\end{proof}

The next lemma, together with Lemma~\ref{blema} and~\eqref{infini}, implies that $U(p,q)$ 
satisfies~\eqref{neravenstvo}. 

\begin{lemma}[Barthe--Mauery~\cite{BM}]
\label{barko}
Let $J$ be a convex subset of $\mathbb{R}$, and let $V(p,q) \colon J \times \mathbb{R}_{+} \to \mathbb{R}$ be such that 
\begin{align}
&V_{pp}+\frac{V_{q}}{q} \leq 0 \quad \text{for all}\quad (p,q) \in J\times \mathbb{R}_{+}; \label{inffed}\\
&t \mapsto V(p, \sqrt{t}) \quad \text{is convex for each fixed} \quad p \in J.\label{amoz}
\end{align}
Then for all  $(p,q,a)$ with  $p\pm a \in J$ and $q\geq 0$, we have 
\begin{align}
2V(p,q)\geq V(p+a, \sqrt{a^{2}+q^{2}})+V(p-a,\sqrt{a^{2}+q^{2}}). \label{toloba}
\end{align}
\end{lemma}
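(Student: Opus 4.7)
Introduce the change of variable $W(p,t) := V(p,\sqrt{t})$ on $J \times \mathbb{R}_{+}$. A direct computation gives $V_{pp} = W_{pp}$ and $V_q/q = 2 W_t$, so \eqref{inffed} becomes the heat-type inequality
\begin{equation}
\label{plan:heat}
W_t + \tfrac12 W_{pp} \le 0,
\end{equation}
while \eqref{amoz} is precisely the convexity of $t \mapsto W(p,t)$ for each fixed $p$. Since $W(p,q^2)=V(p,q)$ and $W(p\pm a,\, q^2+a^2) = V(p\pm a,\, \sqrt{q^2+a^2})$, the desired inequality \eqref{toloba} is equivalent to
\begin{equation}
\label{plan:want}
W(p+a,\,q^2+a^2) + W(p-a,\,q^2+a^2) \;\le\; 2\,W(p,q^2).
\end{equation}

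The plan is to prove \eqref{plan:want} probabilistically by running a Brownian motion up to the first exit of the symmetric interval $(-a,a)$. Let $B=(B_s)_{s\ge 0}$ be a standard Brownian motion with $B_0 = 0$ and set $\tau := \inf\{s \ge 0 : |B_s| = a\}$; standard facts give $B_\tau \in \{-a,+a\}$ each with probability $\tfrac12$, $\mathbb{E}\tau = a^2$, and $p + B_{s\wedge \tau}$ stays in the compact set $[p-a, p+a] \subseteq J$. It\^o's formula yields
\[
d\,W\!\bigl(p + B_{s\wedge\tau},\, q^2 + s\wedge\tau\bigr) \;=\; W_p\, dB_s + \bigl(W_t + \tfrac12 W_{pp}\bigr)\, ds,
\]
and the drift is $\le 0$ by \eqref{plan:heat}. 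Thus $s \mapsto W(p + B_{s\wedge\tau}, q^2 + s\wedge\tau)$ is a supermartingale, and optional stopping (first at $\tau \wedge N$, then $N \to \infty$, using that $W_p$ and $W_t + \tfrac12 W_{pp}$ are bounded on $[p-a,p+a]\times[q^2, q^2 + \tau]$ and that $\mathbb{E}\tau < \infty$) gives
\begin{equation}
\label{plan:ito}
\mathbb{E}\bigl[W(p + B_\tau,\, q^2 + \tau)\bigr] \;\le\; W(p, q^2).
\end{equation}

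By the symmetry $B \stackrel{d}{=} -B$, which leaves $\tau$ invariant and swaps $\{B_\tau = a\}$ with $\{B_\tau = -a\}$, one has $\mathbb{E}[f(\tau)\mathbf{1}_{B_\tau = a}] = \mathbb{E}[f(\tau)\mathbf{1}_{B_\tau = -a}] = \tfrac12\mathbb{E} f(\tau)$ for any bounded $f$. Therefore
\[
\mathbb{E}\bigl[W(p+B_\tau,\, q^2+\tau)\bigr] \;=\; \tfrac12\,\mathbb{E}\,W(p+a,\, q^2+\tau) \;+\; \tfrac12\,\mathbb{E}\,W(p-a,\, q^2+\tau).
\]
Jensen's inequality applied to the convex maps $t \mapsto W(p \pm a,\, q^2 + t)$ (hypothesis \eqref{amoz}), combined with $\mathbb{E}\tau = a^2$, now produces
\[
\mathbb{E}\bigl[W(p \pm a,\, q^2 + \tau)\bigr] \;\ge\; W\bigl(p \pm a,\, q^2 + \mathbb{E}\tau\bigr) \;=\; W(p \pm a,\, q^2 + a^2).
\]
Combining the two displays with \eqref{plan:ito} yields \eqref{plan:want}, which is exactly \eqref{toloba} after undoing the substitution $W(p,t) = V(p,\sqrt{t})$.

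The one genuinely delicate point, and what I expect to be the main obstacle, is the regularity required to apply It\^o: the argument above uses $V \in C^{2,1}$, whereas for the target application to the Davis function $U$ of \eqref{upq} the regularity drops to $C^1$ across the curve $|p| = c_\alpha q$. The standard remedy is either to mollify $V$ and pass to the limit in \eqref{plan:want} by continuity, or to verify \eqref{inffed} in the sense of distributions (exactly as the authors note immediately after \eqref{infini}) and read the drift term as a non-positive measure; neither is conceptually subtle, but both need to be handled with some care in a careful writeup.
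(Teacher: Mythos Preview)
Your proof is correct and follows essentially the same Barthe--Maurey argument as the paper: run Brownian motion until it exits $(-a,a)$, use It\^o and \eqref{inffed} to get the supermartingale property, then combine the symmetry $P(B_\tau=\pm a)=\tfrac12$ with Jensen's inequality for the convex map $t\mapsto V(p\pm a,\sqrt{t})$ and $\mathbb{E}\tau=a^2$. The only cosmetic differences are that you introduce the substitution $W(p,t)=V(p,\sqrt t)$ explicitly and use the reflection $B\overset{d}{=}-B$ to identify $\mathbb{E}[f(\tau)\mathbf{1}_{B_\tau=\pm a}]=\tfrac12\mathbb{E}f(\tau)$, whereas the paper phrases the same step via the conditional expectations $\mathbb{E}(\tau\mid W(\tau)=\pm a)=a^2$; your remark about the $C^{2,1}$ regularity issue is also appropriate and matches what the paper leaves to the reader.
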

The lemma says that the global finite difference inequality (\ref{toloba}) is in fact implied by its infinitesimal form (\ref{inffed}) under the extra condition (\ref{amoz}).
\begin{proof}
The argument is borrowed from \cite{BM}. 

Without loss of generality assume $a\geq 0$. Consider the process 
\begin{align*}
X_{t} = V(p+W(t), \sqrt{q^{2}+ t}),  \quad t\geq 0.
\end{align*}
Here $W(t)$ is the standard Brownian motion starting at zero. It follows from It\^o's formula  together with (\ref{inffed}) that $X_{t}$ is a supermartingale. 
Indeed, by It\^o's formula we have 
\begin{align*}
X_{t} = X_{0} + \int_{0}^{t}V_{p} \,dW(t) + \frac12 \int_{0}^{t}\left(V_{pp}+\frac{V_{q}}{\sqrt{q^{2}+t}}\right) \,dt
\end{align*}
and notice that the drift term is negative. 
Let $\tau$ be the stopping time such that $W(\tau)$ hits $a$ or $-a$, i.\,e.

\begin{align*}
\tau = \inf\{ t \geq 0 \colon W(t) \notin (-a,a)\}.
\end{align*} 

The supermartingale property of  $X_t$  and concavity \eqref{amoz} yield the following 
chain of inequalities:
\begin{align*}
V(p,q) &= X_{0} \geq \mathbb{E} X_{\tau}=\mathbb{E} V(p+W(\tau), \sqrt{q^{2}+\tau})  
\\
&=P(W(\tau)=-a) \mathbb{E}(V(p-a, \sqrt{q^{2}+\tau}) | W(\tau)=-a)
\\
&\qquad+P(W(\tau)=a) \mathbb{E}(V(p+a, \sqrt{q^{2}+\tau}) | W(\tau)=a)
\\
&=\half\Big(\mathbb{E}(V(p-a, \sqrt{q^{2}+\tau}) | W(\tau)=-a)
\\
&\qquad+\mathbb{E}(V(p+a, \sqrt{q^{2}+\tau}) | W(\tau)=a) \Big)
\\
& \geq \half\Big(V\big( p-a, \sqrt{q^{2}+ \mathbb{E}(\tau|W(\tau)=-a) }\big)
\\
&\qquad+V\big( p+a, \sqrt{q^{2}+ \mathbb{E}(\tau|W(\tau)=a) }\big) \Big)
\\
&=\half\Big( V\big(p-a, \sqrt{q^{2}+a^{2}}\big) +  V\big(p+a, \sqrt{q^{2}+a^{2}}\big) \Big)\,.
\end{align*}

Notice that  we have used  $P(W(\tau)=a)=P(W(\tau)=-a)=1/2$, $\mathbb{E}(\tau | W(\tau)=a) =\mathbb{E}(\tau | W(\tau)=-a)=a^{2}$, and the fact that the map $t \mapsto V(p, \sqrt{t})$ is convex together with Jensen's inequality.  
\end{proof}

\bigskip

\subsection{Majorization of the obstacle function.}

We have finished the proof of inequality \eqref{neravenstvo}. Now we are going to check \eqref{obstacle6} from page \pageref{obstacle6}. Let 
$$
\kappa_\alpha= -\frac{\alpha c_\alpha^{\alpha-1}}{N_\alpha'(c_\alpha)}.
$$

Function $\kappa_\alpha N_\alpha$ in the first line of \eqref{dex_u} 
is equal to function $g\df c_\alpha^\alpha  -  x^\alpha$ at $x=c_\alpha$. 
To prove that $\kappa_\alpha N_\alpha\ge g$ on 
$[0, c_\alpha]$, thus, it is enough to prove 
$\kappa_\alpha N_\alpha' \le g'$ on this interval. 
At point $c_\alpha$ these derivatives coincide by the choice of 
$\kappa_\alpha$. Notice that $\kappa_\alpha>0$ and that $N_\alpha'$ and $g'$ are negative. Therefore, to check that $-\kappa_\alpha N_\alpha' \ge -g'$  it is enough to show that function $-N_\alpha'/x^{\alpha -1}$ is decreasing  on $[0, c_\alpha]$, i.\,e.
\eq[NalphaDeriv]
{
\Big(\frac{-N_\alpha'}{x^{\alpha-1}}\Big)' \le 0\,.
}
But
$$
\Big(\frac{N_\alpha'}{x^{\alpha-1}}\Big)' = \frac{x N_\alpha'' -(\alpha-1) N_\alpha'}{x^\alpha} =\frac{N_\alpha'''}{x^\alpha},
$$
where the last equality follows from \eqref{hermit}.

On the other hand, from \eqref{series} it follows that $N_\alpha''' =-\alpha N_{\alpha -2}'$. This expression is positive by Lemma \ref{root}. Hence \eqref{NalphaDeriv} is proved. This proves that
$$
u_\alpha \ge c_\alpha^\alpha - |x|^\alpha, \qquad x\in [-c_\alpha, c_\alpha].
$$
We conclude that the function $U$ from page \pageref{upq} majorizes the obstacle:
\eq[majorDavisObst]
{
U(p, q) \ge c_\alpha^\alpha |q|^\alpha -|p|^\alpha\,.
}

\subsection{Why constant $c_\alpha$ is sharp?}
\label{c_alpha_sharp}

The example, which show that the value $c_\alpha$ given on page \pageref{dex_u_alpha} cannot be replaced by larger value is  based on results of A.~Novikov  \cite{No}  and L.~Shepp  \cite{Shepp}.
Introduce the following stopping time
$$
T_a \df \inf\{t>0\colon |W(t)| =a\sqrt{t+1}\}, \quad a>0.
$$
It was proved  in \cite{Shepp}  that $\bE T_a^{\alpha/2} < \infty$ if 
$a < c_{\alpha}$ and that 
$\bE t_{c_\alpha}^{\alpha/2}=\infty$,  $\alpha > 0$. 
This gives us that $\bE t_{a}^{\alpha/2} \to \infty$,
 when $a\to c_\alpha-$.
  From here we get
  $$
  \lim_{a\to c_\alpha-} \frac{\bE (T_a+1)^{\alpha/2}}{\bE T_a^{\alpha/2}} =1\,.
  $$
  By definition of $T_a$  we have $|W(T_a)| =a\sqrt{T_a+1}$, and hence
  $$
  \lim_{a\to c_\alpha-} \frac{\bE|W(T_a)|^\alpha}{\bE T_a^{\alpha/2}} \to c_\alpha^\alpha\,.
  $$
  Now it follows immediately that the best constant  in \eqref{DavisBr01} cannot be larger than $c_\alpha$ defined on page \pageref{dex_u_alpha}. Davis in \cite{Davis} extended this estimate for the case of dyadic square function estimate \eqref{Davis01}.
  
\subsection{Why $U$ from page \pageref{upq} is the smallest  function satisfying \eqref{obstacle6} and \eqref{neravenstvo}?}
\label{the_smallest}
We know that on $\{ (p, q)\colon q\ge 0,\ |p|^\alpha \le c_\alpha^\alpha q^\alpha\}$ 
\begin{equation}
\label{bfUU}
|q|^\alpha c_\alpha^\alpha -|p|^\alpha \le \bfU(p, q) \le U(p, q)\,.
\end{equation}
Indeed, we proved that $U$ satisfies the main inequality and that it majorizes the obstacle $|q|^\alpha c_\alpha^\alpha -|p|^\alpha$. We also proved that
$\bf U$ is the smallest such function (this is true for any obstacle whatsoever). Hence, \eqref{bfUU} is verified.

But now we want to demonstrate that the Bellman function  is already found: $\bfU=U$. To do that we need to work a little bit more.

\bigskip

By definition on page \pageref{bfU6}  $\bfU$ is homogeneous of degree $\alpha$. 
We introduce $\bell(p)\df \bfU(p, 1)$, $b(p)\df U(p, 1)$. Thus we need to prove that
\eq[ravenstvo]
{
b(p)= \bell(p),\qquad p\in [-c_\alpha, c_\alpha].
}

One can easily rewrite \eqref{neravenstvo} in terms of $\bell$:  for all  $x\pm\tau  \in [-c_\alpha, c_\alpha]$  the following holds:
\eq[conc_bell6]
{
2\bell(x) \ge (1+\tau^2)^{\alpha/2} \bigg(\bell\Big(\frac{x+\tau}{\sqrt{1+\tau^2}}\Big)+\bell\Big(\frac{x-\tau}{\sqrt{1+\tau^2}}\Big)\bigg).
}
Since by construction $U(p, q) =0$ if $|q|^\alpha c_\alpha^\alpha -|p|^\alpha=0$ we conclude that $b(\pm c_\alpha)= \bell(\pm c_\alpha)=0$.

Combining \eqref{neravenstvo} with a simple observation that $\bfU$ by definition increases in $q$, we can conclude that function $\bf U$ is concave in $p$ for every fixed $q$, $\bell$ is concave. 

Let us recall that for any concave function $f$ the following holds (see e.g. \cite{EG}):
\begin{equation}
\label{concA}
f(x+h) = f(x) + f'(x) h + \frac12 f''(x) h^2 + o(h^2), \quad h\to 0, \quad \text{for a.e.}\,\,x\,.
\end{equation}

Then \eqref{concA} and inequality \eqref{conc_bell6} implies that $\bell''-x \bell'+\alpha \bell \le 0$ a.e.  But function $\bell$ is concave. 
In particular, it is everywhere defined and continuous, and its derivative $\bell'$ is also its distributional derivative, and it is everywhere defined
decreasing function.  

Let $(\bell)''$ denote the distributional derivative of decreasing function $\bell'$. Thus it is a non-positive measure. We denote its singular part by symbol $\sigma_s$. 
Hence, in the sense of distributions
\begin{equation}
\label{concD}
(\bell)'' - x \bell'\, dx+\alpha\bell \,dx= \big(\bell'' - x \bell' +\alpha\bell \big) \,dx + d \sigma_s \le 0\,.
\end{equation}


\begin{lemma}
\label{ode0}
Let $\alpha>0$. Let even non-negative concave function $v$  defined on $[-c_\al, c_\al]$ satisfy $v(\pm c_\al)=0$. Let $v$ satisfy $v''-x v'+\alpha v \le 0$ on $(-c_\al, c_\al)$ pointwisely and in the sense of distributions.  Assume also that
$v$ have finite derivative at $c_\al$: $v'(c_\al)>-\infty$.
Then $v''-x v'+\alpha v  = 0$ on $(-c_\al, c_\al)$ pointwisely and in the sense of distributions. Also $v= c u$ for some constant $c$. 
\end{lemma}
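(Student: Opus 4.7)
The strategy is to test the non-positive distribution $d(Lv) := d v'' - x v'\,dx + \alpha v\,dx$ (which, by hypothesis, is $\le 0$ both pointwise and in the sense of distributions) against the Gaussian-weighted Hermite function $\phi(x) := e^{-x^2/2} N_\alpha(x)$. By Lemma~\ref{root} and the choice of $c_\alpha$ as the smallest positive zero of $N_\alpha$, the function $\phi$ is strictly positive on $(-c_\alpha, c_\alpha)$ and vanishes at $\pm c_\alpha$.

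The key observation is that $\phi$ lies in the kernel of the formal adjoint of $L$: setting $L^*\psi := \psi'' + x\psi' + (1+\alpha)\psi$, a direct calculation (equivalently, the Sturm--Liouville identity $L^*(e^{-x^2/2}w) = e^{-x^2/2} Lw$) gives $L^*\phi = e^{-x^2/2} LN_\alpha = 0$ by virtue of \eqref{hermit}. I would then integrate by parts twice in $\int \phi\, d(Lv)$ to move both derivatives from $v$ onto $\phi$. Since $v$ is concave, $v'$ has bounded variation on $[-c_\alpha, c_\alpha]$ with finite one-sided limits at the endpoints (using the hypothesis $v'(c_\alpha) > -\infty$ together with evenness). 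The boundary terms $[\phi v']_{-c_\alpha}^{c_\alpha}$ and $[\phi' v]_{-c_\alpha}^{c_\alpha}$ both vanish: the first because $\phi(\pm c_\alpha)=0$, the second because $v(\pm c_\alpha)=0$. What remains is
\begin{equation*}
\int_{-c_\alpha}^{c_\alpha} \phi\, d(Lv) \;=\; \int_{-c_\alpha}^{c_\alpha} v\, L^*\phi\, dx \;=\; 0.
\end{equation*}
On the other hand, the left-hand side is $\le 0$ since $\phi \ge 0$ and $d(Lv) \le 0$ as a measure; having just shown it equals zero while $\phi > 0$ on the open interval, this forces $d(Lv) = 0$ on $(-c_\alpha, c_\alpha)$.

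With $Lv = 0$ distributionally, a standard bootstrap upgrades $v$ to a classical $C^\infty$ solution of the Hermite ODE on $(-c_\alpha, c_\alpha)$. The even solutions of this second-order equation form a one-dimensional subspace (an even solution automatically has $v'(0)=0$ and is determined by $v(0)$), spanned by $N_\alpha$; thus $v = v(0)\,N_\alpha$, and since $u_\alpha = \kappa_\alpha N_\alpha$ on $[-c_\alpha, c_\alpha]$ by \eqref{dex_u}, we obtain $v = c\, u_\alpha$ with $c = v(0)/\kappa_\alpha$. The main technical obstacle is justifying the integration by parts when $v''$ is only a non-positive Radon measure and $\phi$ vanishes at $\pm c_\alpha$ without being compactly supported inside $(-c_\alpha, c_\alpha)$. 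I would handle this by a standard cutoff: pair $Lv$ against $\phi\chi_\eps$ with $\chi_\eps \in C_c^\infty((-c_\alpha,c_\alpha))$ increasing to $\1$, observe that $\int \phi\chi_\eps\, d(Lv) \le 0$ because the test function is non-negative, and take $\eps \to 0$. The hypotheses $v(\pm c_\alpha) = 0$ and the finiteness of $v'(c_\alpha)$ are precisely what make the limiting boundary terms vanish cleanly.
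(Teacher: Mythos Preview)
Your proof is correct. It is the same Sturm--Liouville argument as the paper's, repackaged: where you pair the non-positive measure $Lv$ with the adjoint null-function $\phi=e^{-x^2/2}N_\alpha$ and integrate by parts over $(-c_\alpha,c_\alpha)$, the paper instead forms the Wronskian $w=v'u-u'v$ (with $u=u_\alpha$), derives the identity $(e^{-x^2/2}w)'=e^{-x^2/2}u\,(Lv)$ --- this is exactly your relation $L^*(e^{-x^2/2}u)=e^{-x^2/2}Lu=0$, integrated once against $Lv$ --- and then integrates from $0^+$ to $c_\alpha^-$. At the left endpoint the paper uses evenness and concavity (forcing $v'(a)\le 0$ for $a>0$, hence $\limsup_{a\to 0^+}e^{-a^2/2}w(a)\le 0$), while you work on the full symmetric interval and afterwards invoke the one-dimensionality of even solutions. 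The Wronskian packaging has the mild advantage that $w\equiv 0$ delivers $v=cu$ immediately, without a regularity bootstrap; your adjoint-pairing packaging makes the role of the boundary hypotheses (both $\phi$ and $v$ vanish at $\pm c_\alpha$ with finite slope, so the cutoff error terms are $O(\eps)$) particularly transparent.
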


\begin{proof}
Let $u\df u_\al$ from \eqref{dex_u}. It is $C^2$ function and $u'' -x u' +\alpha u =0$ on $[-c_\al, c_\al]$.
Denote
$$
g\df v'' - x v' + \alpha v\,.
$$
Function $v$ is concave, so its second derivative is defined a.e., and we assumed that $g\le 0$.

Consider everywhere defined function
$$
w \df v' u -u' v\,.
$$
Its derivative is defined almost everywhere, and let us first calculate it  a.e.:
$$
w' = v'' u - u'' v= (g+ x v' -\al v) u - (x u' -\al u) v =g u + x w\,.
$$
Also in distributional sense
$$
(w)' = (v)'' u - u'' v\, dx = (g u + x w)\, dx + u \, d\sigma_s\,.
$$
Hence,
\begin{equation}
\label{Exp}
\frac{d}{dx} e^{-x^2/2} w= gu e^{-x^2/2}, \quad \text{for almost every}\,\, x\,,
\end{equation}
and 
\begin{equation}
\label{ExpD}
\Big(e^{-x^2/2} w\Big)' = gu e^{-x^2/2}\, dx + u  e^{-x^2/2}\, d\sigma_s, \,\, \text{in distribution sense}\,.
\end{equation}
Measure $\sigma_s$ is non-positive, therefore, these two inequalities \eqref{Exp}, \eqref{ExpD} mean that
for any two points $0<a<b <1$ we have
$$
e^{-b^2/2} w(b) - e^{-a^2/2} w(a) \le \int_a^b gu e^{-x^2/2}\, dx,
$$
moreover, the inequality is strict, if $\sigma_s(a, b) \neq 0$.

Let us tend $b$ to $1$. Looking at the definition $w= v'u -u'v$ and using the assumptions of lemma, we conclude
that $e^{-b^2/2} w(b)\to 0$.  Hence, 
\begin{equation}
\label{a1exp}
 e^{-a^2/2} w(a) \ge \int_a^1 (-g)u e^{-x^2/2}\, dx\,.
\end{equation}
Again the  inequality is strict if $\sigma_s(a, 1) \neq 0$.

Now let us tend $a\to 0$. By smoothness and evenness $u'(a)\to 0$. But $u(a)>0$ and $v'(a) \le 0$ for  a. e. $a>0$. Therefore,
$$
\limsup_{a\to 0+}  e^{-a^2/2} w(a) \le 0\,.
$$
Combining this with \eqref{a1exp} we conclude that
$$
\int_a^1 (-g)u e^{-x^2/2}\, dx \le 0
$$
with the strict inequality if $\sigma_s(0, 1) \neq 0$. The strict inequality is of course leads to contradiction (recall that $-g\ge 0, u>0$), so we conclude
that $\sigma_s$ is a zero measure on $(0,1)$. But also even a non-strict inequality implies that $g=0$ a.e.

We conclude from \eqref{Exp}, \eqref{ExpD} that $ e^{-x^2/2} w(x)$ is constant on $(0, 1)$. But we already saw that 
this function tends to $0$ when $x$ tends to $1$. Thus, identically on $(0,1)$
$$
u' v- v' u = w =0\,.
$$
This means that $v/u = const$. Lemma is proved.

\end{proof}

\bigskip

Now it is easy to prove \eqref{ravenstvo}: $\bell=b$.  Choose $v=\bell$, the  assumptions on ordinary differential inequality is easy to verify, see \eqref{concD}.  Of course this function vanishes at $\pm c_\al$.  Also by the definition of $b$ it is clear  (see \eqref{dex_u}, \eqref{upq}) that
$$
b'(c_\al)= -\al c_\al^{\al-1}>-\infty\,.
$$
We are left to see that  the same is true for $\bell'(c_\al)$. 

Recall that $\bell(\cdot)= {\bf U}(\cdot, 1)$,   $b(\cdot) = U(\cdot, 1)$, then by \eqref{bfUU}  we definitely know that 
$$
c_\al^\al - |x|^\al \le \bell(x) \le b(x),\quad x \in [-c_\al, c_\al]\,.
$$
The functions on the left and on the right vanish at $c_\al$ and have the same derivative $ -\al c_\al^{\al-1}$ at $C_\al$. Hence,
$\bell$ is in fact differentiable at $c_\al$ (the left derivative exists), and its (left)  derivative satisfies
$$
\bell'(c_\al)=b'(c_\al)= -\al c_\al^{\al-1}>-\infty\,.
$$
But now Lemma \ref{ode0} says that $\bell = const\cdot b$. Since we have the above relationship on derivatives, the constant has to be $1$. 
We proved \eqref{ravenstvo}. This gives 
$$
{\bf U} = U,
$$
where $U$ was defined in \eqref{dex_u}, \eqref{upq}. We found the Bellman function $\bf U$ for Burkholder--Gundy--Davis inequality, and we completely solved the obstacle problem with the obstacle $O(p, q)= c_\al^{\al} q^\al- |p|^\al$, $\al\ge 2$.

\subsection{When obstacle coincides with its heat envelope}
\label{heat_env_coincide}
The next corollary immediately follows from the previous proposition, and it describes one possibility when the heat envelope  coincides with its obstacle 
\begin{cor}
Let $O(p,q)$ be $C^{2}(\mathbb{R}\times[0,\infty))$ obstacle such that 
\begin{align*}
&O_{pp} + \frac{O_{q}}{q}\leq 0 \quad \text{and}\\
&t \mapsto O(p,\sqrt{t}) \quad \text{is convex}. 
\end{align*}
Then  the heat envelope $U$ of $O$ satisfies $U(p,q)=O(p,q)$.
\end{cor}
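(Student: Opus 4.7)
The strategy is transparent: we want to show that the obstacle $O$ is itself its heat envelope. Since by definition the heat envelope $U$ is the \emph{smallest} function that simultaneously majorizes $O$ and satisfies the main inequality \eqref{MISq}, and since trivially $O \ge O$, it suffices to verify that $O$ itself satisfies \eqref{MISq}. Then $O$ belongs to the class over which the infimum defining $U$ is taken, forcing $U \le O$, and the reverse inequality $U \ge O$ is the majorization requirement.

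The key step is to invoke Lemma~\ref{barko} (Barthe--Maurey) with $V = O$ and $J = \mathbb{R}$. The two hypotheses of Lemma~\ref{barko}, namely
\[
V_{pp}+\frac{V_q}{q}\le 0 \qquad \text{and} \qquad t\mapsto V(p,\sqrt{t})\ \text{convex},
\]
are exactly the two hypotheses imposed on $O$ in the statement of the corollary. Hence Lemma~\ref{barko} yields
\[
2O(p,q) \ge O(p+a,\sqrt{a^2+q^2})+O(p-a,\sqrt{a^2+q^2}),
\]
which is precisely the main inequality \eqref{MISq} for $O$.

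With this in hand, we conclude by invoking Theorem~\ref{minUmaxU}: since $O$ satisfies the main inequality and majorizes itself, $O$ is one of the admissible majorants whose infimum is the heat envelope. Therefore $U \le O$. The reverse inequality $U \ge O$ holds by the very definition of the heat envelope. Combining the two yields $U = O$.

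There is no real obstacle in this argument; the only minor point to keep in mind is that the infinitesimal condition $O_{pp}+O_q/q \le 0$ is a priori only meaningful for $q>0$, so when we apply Lemma~\ref{barko} we work on $\mathbb{R}\times(0,\infty)$ and then extend the resulting finite-difference inequality to $q=0$ by continuity, using the $C^2$-regularity of $O$ on $\mathbb{R}\times[0,\infty)$.
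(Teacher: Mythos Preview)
Your proof is correct and follows exactly the route the paper intends: the paper merely states that the corollary ``immediately follows from the previous proposition'' (i.e., Lemma~\ref{barko}), and your argument spells out precisely this implication --- Lemma~\ref{barko} shows $O$ satisfies the main inequality, whence $O$ is an admissible majorant of itself and the heat envelope must coincide with it.
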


The next proposition says that if $O$ satisfies ``backward heat equation'' then the convexity assumption $ t\mapsto O(p,\sqrt{t})$ is necessary and sufficient for main  inequality \eqref{MISq}. 
\begin{prop}
Let $O(p,q) \in C^{4}(\mathbb{R}\times[0,\infty))$ be such that
\begin{align*}
O_{pp}+\frac{O_{q}}{q}=0
\end{align*}
for all $(p,q)\in \mathbb{R}\times(0,\infty)$. Then the following conditions are equivalent 
\begin{itemize}
\item[(i)] The map $t\mapsto O(p,\sqrt{t})$ is convex for $t\geq 0$. 
\item[(ii)]
$
2O(p,q) \geq O(p+a,\sqrt{q^{2}+a^{2}})+O(p-a,\sqrt{q^{2}+a^{2}})
$
for all $p,a \in \mathbb{R}$ and all $q\geq 0$. 
\end{itemize}
\end{prop}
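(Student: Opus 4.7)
For the implication (i) $\Rightarrow$ (ii), the plan is to invoke Lemma~\ref{barko} directly with $V=O$. The hypothesis \eqref{inffed} of that lemma requires $O_{pp}+O_q/q\le 0$, which holds with equality by assumption, while the hypothesis \eqref{amoz} is exactly (i). So the Barthe--Maurey argument (Brownian-motion stopping-time plus Jensen) applies verbatim and yields \eqref{toloba}, which is (ii). Nothing new needs to be computed here.

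For the converse (ii) $\Rightarrow$ (i), the idea is to read off the convexity condition from the fourth-order Taylor expansion of (ii) at $a=0$. Fix $(p,q)$ with $q>0$ and set
$$
R(a)\df 2O(p,q)-O(p+a,\sqrt{q^2+a^2})-O(p-a,\sqrt{q^2+a^2}).
$$
Then (ii) says $R(a)\ge 0$ for all $a$. The function $R$ is smooth (since $O\in C^4$) and even, so $R(0)=R'(0)=R'''(0)=0$, and an elementary computation gives
$$
R''(0)=-2\bigl(O_{pp}+O_q/q\bigr)=0
$$
by the backward heat equation hypothesis. Consequently $R(a)=\tfrac{1}{24}R^{(4)}(0)\,a^4+o(a^4)$, so (ii) forces $R^{(4)}(0)\ge 0$ at every $(p,q)$.

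The main (and only really mechanical) step is to compute $R^{(4)}(0)$ and simplify it using the backward heat equation. Differentiating $O_{pp}+O_q/q=0$ twice in $p$ yields $O_{pppp}=-O_{ppq}/q$, and differentiating in $q$ yields $O_{ppq}=-(O_{pp}+O_{qq})/q$, hence $O_{pppp}=(O_{pp}+O_{qq})/q^2$. Plugging this into the Taylor expansion (whose only nontrivial ingredients are the second and fourth $a$-derivatives of $\sqrt{q^2+a^2}$ at $0$, which equal $1/q$ and $-3/q^3$), every term collapses to a multiple of $O_{pp}+O_{qq}$, and one gets
$$
R^{(4)}(0)=\frac{2\bigl(O_{pp}(p,q)+O_{qq}(p,q)\bigr)}{q^2}.
$$
Thus (ii) is equivalent to $O_{pp}+O_{qq}\ge 0$ pointwise on $\mathbb R\times(0,\infty)$.

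Finally, to close the loop, observe that if $\psi(t)\df O(p,\sqrt t)$ then a direct two-line differentiation gives
$$
\psi''(t)=\frac{1}{4t}\Bigl(O_{qq}(p,\sqrt t)-\frac{O_q(p,\sqrt t)}{\sqrt t}\Bigr),
$$
and the backward heat equation lets us rewrite $O_q/q=-O_{pp}$, so $\psi''(t)\ge 0$ is exactly $O_{pp}+O_{qq}\ge 0$. This is (i). The only step that requires any real care is the fourth-order Taylor computation; once the backward heat equation is used to kill the second-order term, everything else is bookkeeping. The Laplacian-like expression $O_{pp}+O_{qq}$ that emerges makes the equivalence with condition (i) immediate.
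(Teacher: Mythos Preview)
Your approach is essentially the same as the paper's: invoke Lemma~\ref{barko} for (i)$\Rightarrow$(ii), and for (ii)$\Rightarrow$(i) Taylor-expand to fourth order in $a$, using the backward heat equation to kill the $a^{2}$ term and to simplify the $a^{4}$ coefficient down to a single second-order quantity. The paper writes the resulting condition as $O_{q}/q - O_{qq}\le 0$ rather than your equivalent $O_{pp}+O_{qq}\ge 0$; note also that your constant in $R^{(4)}(0)$ should be $4/q^{2}$ rather than $2/q^{2}$, though this does not affect the sign argument.
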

\begin{proof}
The implication $(i) \Rightarrow (ii)$ follows from Lemma \ref{barko}. It remains to  show the implication $(ii)\Rightarrow (i)$. By Taylor's formula as $a \to 0$ we have 
\begin{align*}
&O(p+a,\sqrt{q^{2}+a^{2}})+O(p-a,\sqrt{q^{2}+a^{2}}) 
\\
&=2O(p,q)+\left(\!O_{pp}+\frac{O_{q}}{q} \!\right) a^{2}+\left(\!O_{pppp}+6\frac{O_{ppq}}{q}+3\frac{O_{qq}}{q^{2}}-3\frac{O_{q}}{q^3}\!\right) \frac{a^{4}}{12}+o(a^{4})
\end{align*}
Since $O_{pp}+\frac{O_{q}}{q}=0$ we see that 
\begin{align*}
O_{pppp}+6\frac{O_{ppq}}{q}+3\frac{O_{qq}}{q^{2}}-3\frac{O_{q}}{q^3}=2\left(\frac{O_{q}}{q^3}-\frac{O_{qq}}{q^{2}} \right).
\end{align*}
Therefore,
\begin{align*}
0&\geq O(p+a,\sqrt{q^{2}+a^{2}})+O(p-a,\sqrt{q^{2}+a^{2}})-2O(p,q)=
\\
&=\left(\frac{O_{q}}{q}-O_{qq}\right)\frac{a^{4}}{6q^{2}}+o(a^{4}).
\end{align*}
Thus we obtain that $\frac{O_{q}}{q}-O_{qq}\leq 0$. On the other hand the latter inequality is equivalent to the fact that $t \to O(p,\sqrt{t})$ is convex.  
\end{proof}
 

\section{Bollob\'as function}
\label{bol}

This part of the present article is taken from \cite{HoIvV}. We put it here because the solution of the obstacle problem(s) in this section and the solution
of the obstacle problem in the previous section have so much in common, and at the same time, they have essential differences. So we include the current section for the sake of comparison.

The classical Littlewood--Khintchine inequality states  that
\begin{equation}
\label{LKh}
\big(\sum_{k=1}^n a_k^2\big)^{1/2} \le L \int_0^1 \big| \sum_{k=1}^n a_k r_k(t) dt\big|,
\end{equation}
where $\{r_k(t)\}$ are Rademacher functions. It was one of Littlewood's problem to find the best value for constant $L$. The problem was solved by S.~Szarek~\cite{Sza}, see also~\cite{Hall}. The sharp constant is $L=\sqrt{2}$.

B. Bollob\'as \cite{Bollobas} considered the following related problem, which we formulate in the form convenient for us. The problem of Bollob\'as was: what is the best value for the constant $B$  for the following inequality
\begin{equation}
\label{BolIneq}
\lambda \big|\{ t\in (0,1)\colon Sf(t)\ge \lambda\}\big| \le C\|f\|_1\,?
\end{equation}

Consider $x_n\df\sum_{k=1}^n a_k r_k(t)$. If we denote $\lambda\df  \big(\sum_{k=1}^n a_k^2\big)^{1/2}= Sx_n(x)$ (obviously $Sx_n$ is a constant function), we get

\begin{align}
\label{laBol}
&\big(\sum_{k=1}^n a_k^2\big)^{1/2}\!\!\! =\lambda  \big|\{ t\in (0,1)\colon Sx_n(t)\ge \lambda\}\big|  \le \nonumber\\ 
& C\int_0^1 \big| \sum_{k=1}^n a_k r_k(t)\big| dt\,.
\end{align}

This says that $\sqrt{2}= L\le B$. On the other hand, D. Burkholder in \cite{BuMart} proved that $B\le 3$. B. Bollob\'as in \cite{Bollobas} conjectured the best value of $B$, and in 2009 A. Os\c ekowski \cite{Os2009} proved this conjecture. We will give a slightly different proof by solving the obstacle problem and finding the heat envelopes of two obstacles:
\begin{align}
&O_1(p, q) = {\bf 1}_{q\ge 1} - C_1 |p|, \label{O1} 
\\
&O_2(p, q) = {\bf 1}_{p^2+q^2\ge 1} - C_2 |p|. \label{O2}
\end{align}

We are interested in the smallest possible values of $C_1$ and $C_2$ such that these functions have (finite)  heat envelopes.
The reader will see, in particular, that $C_1=C_2=C$ and that the heat envelopes of these two functions coincide.

\medskip 

Define the following Bellman function:
	\eq[defB6]
	{
	\Bel(x, \lambda) \df   \inf\{\av{|\varphi|}J\colon \av{ \varphi}J = x; \ 
		S_J^2\varphi \geq \lambda \ \text{ a.\,e. on } J\}.
	}
Some of the obvious properties of $\Bel$  are:
	\begin{itemize}
	\item Domain: $\Omega_{\Bel} \df  \{(x, \lambda)\colon x\in\bR; \,\,\lambda > 0\}$;
	\item $\Bel$ is increasing in $\lambda$ and even in $x$;
	\item Homogeneity: $\Bel(tx, t^2\lambda) = |t| \Bel(x, \lambda)$;
	\item Range/Obstacle Condition: $|x| \leq \Bel(x, \lambda) \leq \max\{|x|, \sqrt{\lambda}\}$;
	\item Main Inequality:
		\begin{equation} 
		\label{E:L-MI}
		2\Bel(x, \lambda) \leq \Bel(x-a, \lambda - a^2) + \Bel(x + a, \lambda - a^2), \:\: \forall \: |a| < \sqrt{\lambda}.
		\end{equation}
	\item $\Bel$ is convex in $x$, and so it is easy to see that $\Bel$ is minimal at $x = 0$:
		\begin{equation} \label{E:L-min0}
		\Bel(0, \lambda) \leq \Bel(x, \lambda), \:\: \forall \: x,
		\end{equation}
	therefore  we can use that $\Bel$ is increasing in $\lambda$ and also use the minimality at $x=0$ to obtain from \eqref{E:L-MI} that $\Bel$ is non-decreasing in $x$ for $x\geq 0$, and non-increasing in $x$ for $x\leq 0$;
	\item Greatest Subsolution: If $B(x, \lambda)$ is any continuous non-negative function on $\Omega_{\Bel}$ which satisfies the main inequality \eqref{E:L-MI} and the range condition $B(x, \sqrt{\lambda}) \leq \max\{|x|, \sqrt{\lambda}\}$, then
		$B \leq \Bel$. 
	\end{itemize}

\medskip

\subsection{Bellman induction}
\label{Bell_Ind_Boll}
	
\begin{theorem}
\label{T:bL-GrtSub}
If $B$ is any subsolution as defined above, then \hbox{$B \leq \Bel$}.
\end{theorem}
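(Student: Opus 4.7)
The plan is to use Bellman induction: feed a near-optimal test function into $B$ and apply the main inequality \eqref{E:L-MI} dyadically along a stopping family chosen so that the strict validity constraint $|a|<\sqrt{\lambda}$ is never violated.

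Fix $(x,\lambda)\in\Omega_\Bel$ and $\varepsilon>0$. By the defining infimum of $\Bel$, pick a test function $\varphi\in\cT(J)$ with $\av\varphi J=x$, $S_J^2\varphi\ge\lambda$ a.e.\ on $J$, and $\av{|\varphi|}J\le\Bel(x,\lambda)+\varepsilon$. For each dyadic $K\subseteq J$ set $a_K\df\av\varphi{K_+}-\av\varphi K$ and $s_K\df\sum_{K\subsetneq K'\subseteq J}a_{K'}^2$; both are constants on $K$, and pointwise $|\Delta\cii K\varphi|=|a_K|$. Call $K$ a \emph{stopping interval} when $s_K<\lambda\le s_K+a_K^2$. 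Since $S_J^2\varphi\ge\lambda$ a.e., the collection $\cF$ of such $K$'s is finite and partitions $J$ modulo a null set. For every dyadic $K$ strictly above $\cF$ the children satisfy $s_{K_\pm}=s_K+a_K^2<\lambda$, i.e.\ $a_K^2<\lambda-s_K$, so \eqref{E:L-MI} applies at $K$ with $a=a_K$:
\begin{equation*}
B(\av\varphi K,\lambda-s_K)\le \tfrac12\bigl[B(\av\varphi{K_+},\lambda-s_{K_+})+B(\av\varphi{K_-},\lambda-s_{K_-})\bigr].
\end{equation*}
Telescoping from the root $J$ down to $\cF$ gives
\begin{equation*}
B(x,\lambda)\le\frac1{|J|}\sum_{K\in\cF}|K|\,B(\av\varphi K,\lambda-s_K).
\end{equation*}

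At each $K\in\cF$ I combine the range condition with the defining bound $\lambda-s_K\le a_K^2$ of $\cF$ to obtain
\begin{equation*}
B(\av\varphi K,\lambda-s_K)\le\max\bigl\{|\av\varphi K|,\sqrt{\lambda-s_K}\bigr\}\le\max\bigl\{|\av\varphi K|,|a_K|\bigr\}.
\end{equation*}
A short sign chase on the four cases of $\av\varphi{K_\pm}=\av\varphi K\pm a_K$ establishes the martingale identity $2\max\{|\av\varphi K|,|a_K|\}=|\av\varphi{K_+}|+|\av\varphi{K_-}|$, and Jensen's inequality on each half bounds the right-hand side by $2\av{|\varphi|}K$. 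Summing,
\begin{equation*}
\frac1{|J|}\sum_{K\in\cF}|K|\,B(\av\varphi K,\lambda-s_K)\le\frac1{|J|}\sum_{K\in\cF}\int_K|\varphi|=\av{|\varphi|}J,
\end{equation*}
and combining with the telescoped estimate yields $B(x,\lambda)\le\av{|\varphi|}J\le\Bel(x,\lambda)+\varepsilon$. Sending $\varepsilon\to 0$ completes the proof.

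The delicate ingredient is the choice of the stopping family: the very inequality $s_K+a_K^2\ge\lambda$ that terminates the recursion is also what furnishes $\sqrt{\lambda-s_K}\le|a_K|$ at the leaves, so that the $\max\{|\cdot|,\sqrt{\cdot}\,\}$ obstacle at the terminal nodes converts into a genuine $L^1$ average via the elementary identity $2\max\{|\av\varphi K|,|a_K|\}=|\av\varphi{K_+}|+|\av\varphi{K_-}|$. Once this balance is in place, the remainder is routine bookkeeping; the continuity of $B$ is not even invoked because $\cT(J)$ test functions yield only finitely many evaluations of $B$.
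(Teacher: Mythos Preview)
Your argument is correct and follows essentially the same Bellman induction as the paper's proof: you descend the dyadic tree applying the main inequality until the residual parameter $\lambda-s_K$ first drops to or below $a_K^2$, and at those stopping leaves you invoke the range condition together with $\max\{|\av\varphi K|,|a_K|\}\le\av{|\varphi|}K$. The only cosmetic differences are that you package the recursion as an explicit stopping family $\cF$ and spell out the identity $2\max\{|p|,|a|\}=|p+a|+|p-a|$, whereas the paper runs the recursion level by level with a fallback at the terminal depth $N$; the underlying mechanism is identical.
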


\begin{proof}
We must prove that $B(x, \lambda) \leq \av{|\varphi|}J$ for any function $\varphi$ on $J$ with $\av{\varphi}J = x$,
$|J| = |\{x\in J \colon S_J^2\varphi(x) \geq \lambda\}|$. As before, we may assume that there is some dyadic level $N \geq 0$ below which the Haar coefficients of $\varphi$ are zero.

If $\lambda \leq (\Delta\cii J\varphi)^2$, then by the range/obstacle condition above
	$$ B(x,  \lambda) \leq \max\{|x|, \sqrt{\lambda}\} \leq \max\{|x|, |\Delta\cii J\varphi|\} \leq \av{|\varphi|}J,$$
and we are done. Otherwise, put $\lambda_{J_{\pm}} = \lambda - (\Delta\cii J\varphi)^2 > 0$, $x_{J_{\pm}} = \av{\varphi}{J_{\pm}}$.
Then by the main inequality:
	$$|J|B(x,\lambda) \leq |J_{-}|B(x_{J_{-}},  \lambda_{J_{-}}) +
		|J_{+}|B(x_{J_{+}},  \lambda_{J_{+}}).$$
If $\lambda_{J_{-}} \leq (\Delta_{J_{-}}\varphi)^2$, it follows as before that 
	$|J_{-}|B(x_{J_{-}}, \lambda_{J_{-}}) \leq \int_{J_{-}}|\varphi|$,
and otherwise we iterate further on $J_{-}$.

Continuing this way down to the last level $N$ and putting 
	$\lambda_I \df  \lambda - (\Delta_{I^{(1)}}\varphi )^2- \ldots - (\Delta\cii J\varphi)^2$
for every $I \in \cD_N(J)$, where $I^{(1)}$ denotes the dyadic father of $I$, the previous iterations have covered all cases where $\lambda_I \leq 0$, and we have (with $x_I\def  \av{\varphi}{I}$)
	\begin{equation}\label{E:bL-GrtSub-temp} 
	|J|B(x, \lambda)\  \leq \sum_{\genfrac{}{}{0pt}{}{I\in \cD_{N}(J)}{ \lambda_I \leq 0}} \int_I |\varphi| + 
		\sum_{\genfrac{}{}{0pt}{}{I \in \cD_{N}(J)} {\lambda_I > 0}} \!\!|I| B(x_I,  \lambda_I).
	\end{equation}
Now note that for all $I \in \cD_N(J)$ we must have
 $\lambda_I \leq (\Delta\cii I\varphi)^2$ just because $S_J^2\varphi(x) \geq \lambda$ everywhere on $J$, so we use condition the range/obstacle condition as before to obtain
	$B(x_I, \lambda_I) \leq \max\{|x_I|, |\Delta\cii I\varphi|\} \leq \av{|\varphi|}J.$
Finally, \eqref{E:bL-GrtSub-temp} becomes:
	$$
	|J|B(x, \lambda) \ \leq \sum_{I \in \cD_N(J)}\int_I |\varphi| = \int_J |\varphi|.
$$
	This finishes the proof of the claim
	$$
	B \le \Bel\,.
	$$
\end{proof}

\subsection{Finding the candidate for  $\Bel(x,  \lambda)$}

We introduce 
$$
\bell(\tau)\df \Bel(\tau, 1)\,.
$$
Using homogeneity, we write
	$$  
	\sqrt{\lambda} \bell(\tau)=  \sqrt{\lambda} \Bel\left(\frac{x}{\sqrt{\lambda}} , 1\right) =\Bel(x, \lambda)\,,\,\,\, 
	\text{where } \tau=\frac{x}{\sqrt{\lambda}}\,. 
	$$
Then $\bell\colon \bR \rightarrow [0, \infty)$, $\bell$ is even in $\tau$, and from \eqref{E:L-min0}:
	\begin{equation}\label{E:beta-min0}
		\bell(0) \leq \bell(\tau), \qquad\forall \: \tau.
		\end{equation}
Moreover, $\bell$ satisfies
		\begin{equation}\label{E:beta-OC}
		\bell(\tau) = |\tau|, \qquad\forall \: |\tau| \geq 1\,.
		\end{equation}
		
		\medskip

		We are looking for a candidate $B$  for $\Bel$. We will assume  now that $\Bel$ is smooth. 
		We will find the candidate under this assumption, and later we will prove that thus found function is indeed $\Bel$.
Using again Taylor's formula, the infinitesimal version of \eqref{E:L-MI} is
	\begin{equation}
	\label{infiniL}
	{\Bel}_{xx} - 2{\Bel}_{\lambda} \geq 0.
	\end{equation}
In terms of $\bell$, this becomes
	\begin{equation}\label{E:beta-DI}
	\bell''(\tau) + \tau \bell'(\tau) - \bell(\tau) \geq 0.
	\end{equation}
Since $\bell$ is even, we focus next only on $\tau \geq 0$. 

\medskip

Let  symbol $\Phi$ denote the following  function:
	$$
	\Phi(\tau)\df \int_0^\tau e^{-y^2/2} \, dy\,.
	$$
	Put
	$$
\Psi(\tau) = \tau\Phi(\tau) + e^{-\tau^2/2}, \qquad\forall \: \tau \geq 0.
$$
The general solution of the differential equation 
$$
z''(\tau) + \tau z'(\tau) - z(\tau) = 0, \qquad \tau \geq 0
$$ 
is
	$$
	z(\tau) = C \Psi(\tau) + D\tau\,.
	$$
	
Note that
	\begin{equation}
	\label{Phi}
	\Psi'(\tau) =\Phi(\tau),\qquad \Psi''(\tau) = e^{-\tau^2/2}\,.
	\end{equation}
Since $\bell(\tau) = \tau$ for  $\tau \geq 1$,  see \eqref{E:beta-OC},
a reasonable candidate for our function $\bell$ is one already proposed by B. Bollobas \cite{Bollobas}:
	\begin{equation}\label{E:beta-candidate}
	b(\tau) \df  \left\{ \begin{array}{ll}
		\frac{\Psi(\tau)}{\Psi(1)}, &\quad 0 \leq \tau < 1
		\\
		\tau, &\quad \tau \geq 1.
		\end{array}\right.
	\end{equation}
In other words, a candidate for $\Bel$ is
	\begin{equation}
	\label{L1}
	B(y, \la)=\begin{cases}\sqrt{\lambda} \frac{\Psi\left(\frac{|y|}{\sqrt{\lambda}}\right)}{\Psi(1)},\quad&\sqrt{\la}\ge |y|,
	\\
	|y|,\qquad &\sqrt{\la}\le |y|\,.\end{cases}
	\end{equation}

Our first goal  will be to go from differential  inequality  \eqref{infiniL} to its finite difference version \eqref{MISq}. 

\begin{lemma}
\label{subs-2}
The function $B$ defined in \eqref{L1} satisfies the  finite difference main inequality \textup(the analog of \eqref{E:L-MI}\textup)\textup:
\begin{equation}
\label{MISqL}
2 B(y, \lambda) \le B(y-a, \lambda-a^2) + B(y+a, \lambda-a^2)\,.
\end{equation}
\end{lemma}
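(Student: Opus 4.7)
The plan is to prove \eqref{MISqL} by the It\^o/Barthe--Maurey method of Lemma~\ref{barko}, adapted to the reversed inequality direction and to $\sqrt{\lambda-a^2}$ in place of $\sqrt{q^2+a^2}$. The abstract statement I would isolate is: if $V\colon\bR\times\bR_+\to\bR$ satisfies (in the distributional sense) the infinitesimal inequality $V_{yy}-2V_\lambda\ge0$ and the map $\mu\mapsto V(y,\mu)$ is concave for each fixed $y$, then $2V(y,\lambda)\le V(y-a,\lambda-a^2)+V(y+a,\lambda-a^2)$ whenever $\lambda-a^2\ge 0$. The proof mirrors Lemma~\ref{barko}: set $X_t\df V(y+W(t),\lambda-t)$ with $W$ standard Brownian motion, apply It\^o to obtain
$$
dX_t=V_y\,dW+\tfrac12\bigl(V_{yy}-2V_\lambda\bigr)\,dt,
$$
which makes $X_t$ a submartingale; stop at $\tau\df\inf\{t\ge0\colon|W(t)|=a\}$ (for which $\mathbb{E}\tau=a^2$ and $\tau$ is independent of $\sign W(\tau)$ by symmetry); and apply Jensen's inequality using the concavity of $\mu\mapsto V(y\pm a,\mu)$.

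Verifying the infinitesimal inequality for $B$ from \eqref{L1} is a direct computation: on the inner region $|y|<\sqrt\lambda$, using \eqref{Phi} and the ODE $\Psi''(\tau)+\tau\Psi'(\tau)-\Psi(\tau)=0$ together with the chain rule yields the equality $B_{yy}=2B_\lambda$; on the outer region $|y|>\sqrt\lambda$ we have $B\equiv|y|$ so again $B_{yy}=2B_\lambda=0$; and across the interface $|y|=\sqrt\lambda$ the gradient $B_y$ jumps upward from $\Phi(1)/\Psi(1)<1$ to $1$, producing a nonnegative Dirac in $B_{yy}$, while $B_\lambda$ is merely bounded and contributes no singular part, so the distributional inequality $B_{yy}-2B_\lambda\ge 0$ is valid on the whole half-plane.

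The concavity hypothesis is the main obstacle, because for fixed $y>0$ the function $\mu\mapsto B(y,\mu)$ is flat (equal to $y$) on $[0,y^2]$ and concave on $[y^2,\infty)$, the latter by the direct calculation
$$
B_{\lambda\lambda}=\frac{e^{-y^2/(2\lambda)}}{4\lambda^{3/2}\Psi(1)}\Bigl(\frac{y^2}{\lambda}-1\Bigr)\le 0,
$$
but with an upward slope jump at $\mu=y^2$, so $B(y,\cdot)$ is not globally concave. I would remedy this by splitting each expectation $\mathbb{E}\,B(y\pm a,\lambda-\tau)$ according to whether $\lambda-\tau$ lies in the flat region $[0,(y\pm a)^2]$, where the integrand reduces to the constant $|y\pm a|$ and monotonicity in $\mu$ is enough, or in the concave region $[(y\pm a)^2,\lambda]$, where Jensen applies, and combining the two contributions using $\mathbb{E}\tau=a^2$.

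The final technical step---justifying It\^o's formula for the only piecewise smooth $B$---is handled by the standard mollification argument: convolve $B$ with a smooth nonnegative bump in both variables to obtain $C^\infty$ approximations that inherit the infinitesimal inequality from the distributional statement above, run the submartingale/stopping argument for each approximation, and pass to the limit by pointwise convergence together with the uniform bound $B\le\max(|y|,\sqrt\lambda)$.
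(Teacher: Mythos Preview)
Your approach has a genuine gap at the Jensen step, and the paper explicitly warns that the Barthe--Maurey method fails here: right before the proof of Lemma~\ref{subs-2} it says that $B$ ``exactly misses the extra property \eqref{amoz} of Lemma \ref{barko}'' and that this is why direct calculations are required. Your proposed splitting does not repair the failure of concavity. Concretely, take $y=0$, $a=1$, $\lambda=1+\eps$ with small $\eps>0$, so that $\lambda-a^2=\eps$ and both points $(\pm1,\eps)$ lie in the outer region, giving $B(\pm1,\eps)=1$. The exit time $\tau$ of $[-1,1]$ satisfies $\bP(\tau<\eps)>0$, and on that event $\lambda-\tau>1$ puts $(\pm1,\lambda-\tau)$ in the inner region where $B(\pm1,\lambda-\tau)>1$. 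Hence
\[
\bE\,B(\pm1,\lambda-\tau)\;>\;1\;=\;B(\pm1,\lambda-a^2),
\]
so the inequality you need after the submartingale step, $\bE\,B(y\pm a,\lambda-\tau)\le B(y\pm a,\lambda-a^2)$, is simply false in this regime, term by term and even after summing the two terms. Your splitting only shows that the flat-region contribution is $\le B(y\pm a,\lambda-a^2)$; on the concave-region piece conditional Jensen gives $B(y\pm a,\bE[\lambda-\tau\mid\text{concave}])$, and since the conditioning is on $\lambda-\tau$ being large and $B$ is increasing in its second argument, this bound lies \emph{above} $B(y\pm a,\lambda-a^2)$, not below. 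There is also the issue that $\lambda-\tau$ can be negative (the exit time is unbounded), which you do not address; one can extend $B$ by $|y|$ for $\lambda\le0$, but this does not help with the concavity failure.

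The paper's proof proceeds instead by a direct case analysis according to where the two points $(y\pm a,\lambda-a^2)$ sit relative to the parabola $\lambda=y^2$. When both are inside, one differentiates in the step size and reduces to the elementary inequality~\eqref{bo3}, which follows from concavity of $e^{-s^2/2}$ on $[-1,1]$. When one or both points lie outside, separate explicit estimates (inequalities \eqref{patulya1}--\eqref{sami}) are verified by hand. This is precisely the ``direct calculations'' route the paper announces; there is no shortcut via the Brownian-motion argument for this $B$.
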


We already saw in Lemma \ref{barko} that under some extra assumptions of convexity one can 
derive the finite difference inequalities from their differential form (infinitesimal form). 
Unfortunately, this approach will not work for function $B$ defined in \eqref{L1}. 
This function exactly misses the extra property \eqref{amoz} of Lemma \ref{barko}.
In fact, we deal now with convexity paradigm  rather than concavity conditions of Lemma \ref{barko}, so the right analog of 
property \eqref{amoz} for $B$ in the above formula would be
$$
\lambda \to B(y, \lambda)\,\, \text{is a concave function for every fixed}\,\,  y\,.
$$

But it is obvious that our candidate  $B$ does not have this property. This is why the proof of Lemma \ref{subs-2} requires direct calculations.
This requires splitting the proof into several cases. One of them was considered in \cite{Bollobas}, but other cases were only mentioned there.

\begin{proof}[Proof of Lemma \ref{subs-2}]
By symmetry we can think that $x\ge  0$. Case 1) will be when  both points $x\pm t, \la-t^2)$ lie in $\Pi$ (i.\,e. they lie over parabola $\la=x^2$).


\bigskip

Case 1). We follow \cite{Bollobas}. Put 
\begin{equation}
\label{X}
X(x, \tau)\df  \frac{x+\tau}{(1-\tau^2)^{1/2}},\,\, \tau\in [0, x],\,\, x\in [0, 1)\,.
\end{equation}
In our case \eqref{MISqL}  can be rewritten as \hbox{($\tau\df  a/\sqrt{\la},$ \, $ x=y/\sqrt{\la}$)}:
\begin{equation}
\label{bo1}
2\Psi(x) \le \Psi(X(x, \tau)) +\Psi(X(x, -\tau)),
\end{equation}
which is correct for $\tau=0$. Let us check that 
\begin{equation}
\label{bo2}
\frac{\pd}{\pd \tau} (\Psi(X(x, \tau)) +\Psi(X(x, -\tau))) \ge 0\,.
\end{equation}
Using \eqref{Phi}, we get the equality
\begin{align*}
&\frac{\pd}{\pd \tau} (\Psi(X(x, \tau)) +\Psi(X(x, -\tau))) = \frac1{1-\tau^2} ( \Phi(X(x, \tau)) - \Phi(X(x, -\tau))) \\
& \frac{x\tau}{1-\tau^2}( \Phi(X(x, \tau)) + \Phi(X(x, -\tau))) - \frac{\tau}{(1-\tau^2)^{1/2}}(X(x, \tau)) \Phi(X(x, \tau))   
\\
&+X(x, -\tau))\Phi(X(x, -\tau)))-\frac{\tau}{(1-\tau^2)^{1/2}}( e^{-X(x, \tau)^2/2} +  e^{-X(x, -\tau)^2/2} )\,.
\end{align*}
After plugging  \eqref{X} this simplifies to
\begin{align*}
&\frac{\pd}{\pd \tau} (\Psi(X(x, \tau)) +\Psi(X(x, -\tau))) =( \Phi(X(x, \tau)) - \Phi(X(x, -\tau))) 
\\
&-\frac{\tau}{(1-\tau^2)^{1/2}}( e^{-X(x, \tau)^2/2} +  e^{-X(x, -\tau)^2/2} )\,.
\end{align*}
But $\frac{\tau}{(1-\tau^2)^{1/2}} = \half  (X(x, \tau)- X(x, -\tau))$, so  to prove \eqref{bo2} one needs to check the following inequality.
\begin{equation}
\label{bo3}
\dashint_{X(x, -\tau)}^{X(x, \tau)} e^{-s^2/2} ds \ge \half ( e^{-X(x, \tau)^2/2} + e^{-X(x, -\tau)^2/2})\,.
\end{equation}
This inequality holds because in our case 1) we have $X(x, -\tau)\in [-1,1], X(x, \tau)\in [-1,1]$, 
and function $s\to e^{-s^2/2}$ is concave on the interval $[-1,1]$.
(It is easy  that for every concave function on an interval, its average over the interval is at least its average over the ends of the interval.)

Case 2). Now suppose that the left point $(x-t, \la-t^2)$ lies on parabola. 
By homogeneity we can always think that $\la=1$.  We continue to consider by symmetry $x\ge 0$ only.
If $(x-t,1-t^{2})$  is such that  $(x-t)^{2}=1-t^{2}$ then we need to show that 
\begin{align}\label{patulya1}
2 \frac{ \Psi(x)}{\Psi(1)}\leq 2t
\end{align}
Clearly $0\leq t\leq 1$, $0\leq x\leq t$. From $(x-t)^{2}=1-t^{2}$ 
we obtain that $t-\sqrt{1-t^{2}}\df x(t) \geq 0,$ so $t\geq \frac{1}{\sqrt{2}}$, and the inequality (\ref{patulya1}) simplifies to 
$$
x(t)\leq \Psi^{-1}(\Psi(1)t), \quad  \frac{1}{\sqrt{2}}\leq t\leq 1.
$$
The left hand side is convex and the right hand side is concave. 
Since at $t=1$ and $t=\frac{1}{\sqrt{2}}$ the inequality holds then it holds on the whole interval $[1/\sqrt{2},1]$. 

So we proved that if the left point already left $\Pi$ (and then automatically the right point also already left it), the desired inequality holds. 

Case 3). It   remains to show that  if the right point already left $\Pi$ but the left point is in $\Pi$, then \eqref{MISqL} still holds. 
Again by homogeneity we can always think that $\la=1$. Then the required inequality amounts to 
\begin{align*}
2\Psi(x) \leq \sqrt{1-t^{2}} \Psi\left(\frac{t-x}{\sqrt{1-t^{2}}}\right)+\Psi(1)(x+t)
\end{align*}
where either $\sqrt{1-t^{2}}-t \leq x\leq t\leq 1$ or $\sqrt{1-t^{2}}-t \leq t\leq x\leq 1$ . It is the same as to show 
\begin{align}\label{erti1}
 \Psi\left(\frac{t-x}{\sqrt{1-t^{2}}}\right) + \left(\frac{t-(\frac{2\Psi(x)}{\Psi(1)}-x)}{\sqrt{1-t^{2}}}\right)\Psi(1) \geq  0
\end{align}
for all $0\leq x\leq 1$ if $ \frac{\sqrt{2-x^{2}}-x}{2}< t< \frac{x+\sqrt{2-x^{2}}}{2}$. 
The left inequality says that the right point already crossed parabola $\pd\Pi$  and the right inequality says that the left point is still inside  $\Pi$.

 Let as show that the derivative in $t$ of the left hand side of (\ref{erti1}) is nonnegative. 
 If this is the case then we are done. $\Psi$ is increasing (see \eqref{Phi}), and since 
 $xt\leq 1$ therefore $t \mapsto \Psi\left(\frac{t-x}{\sqrt{1-t^{2}}}\right)$ is increasing 
 as a composition of two increasing functions. By the same logic, to check the monotonicity  
 of the map $t\mapsto \frac{t-(\frac{2\Psi(x)}{\Psi(1)}-x)}{\sqrt{1-t^{2}}}$ 
 it is enough to verify that $t(\frac{2\Psi(x)}{\Psi(1)}-x) \leq 1$. The latter inequality follows from the following two simple inequalities 
\begin{align}
&\Psi(x) \geq  \Psi(1) x, \quad 0\leq x\leq 1
\label{ori}
\\
&\left(\frac{x+\sqrt{2-x^{2}}}{2} \right) \left(\frac{2\Psi(x)}{\Psi(1)}-x\right)\leq 1, \quad  0\leq x\leq 1 \label{sami}
\end{align}
Indeed, to verify (\ref{ori}) notice that  
\eq[ori6]
{
\frac{d}{dx}\frac{\Psi(x)}{x} = \frac{x\Phi(x)-\Psi(x)}{x^{2}} = -\frac{e^{-\frac{x^{2}}{2}}}{x^{2}} <0,
}
therefore $\frac{\Psi(x)}{x}\geq \Psi(1)$ when $0\le x\le 1$. 

To verify (\ref{sami}) it is enough to show that 
$$
\frac{\Psi(x)}{\Psi(1)x}\leq \frac{1}{x^{2}+x\sqrt{2-x^{2}}}+\frac{1}{2}
$$
If $x=1$ we have equality. Taking derivative of the mapping 
$$
x \to  \frac{\Psi(x)}{\Psi(1)x} -\frac{1}{x^{2}+x\sqrt{2-x^{2}}}-\frac{1}{2}
$$
 we obtain 
$$
\frac{2}{x^{2}}\left(-\frac{e^{-\frac{x^{2}}{2}}}{2\Psi(1)}+\frac{x+\frac{1-x^{2}}{\sqrt{2-x^{2}}}}{(x+\sqrt{2-x^{2}})^{2}}\right)\geq 0
$$
To prove the last inequality it is the same as to show that 
$$
\frac{\sqrt{2-x^{2}}+x(2-x^{2})}{x\sqrt{2-x^{2}}+1-x^{2}}\leq \Psi(1) e^{\frac{x^{2}}{2}}\,.
$$
For the exponential function we use the estimate $e^{\frac{x^{2}}{2}}\geq 1+\frac{x^{2}}{2}$. 
We estimate $\sqrt{2-x^{2}}$ from above in the numerator by $\sqrt{2}(1-\frac{x^{2}}{4})$, 
and we estimate $\sqrt{2-x^{2}}$ from below in the denominator by $(1-\sqrt{2})(x-1)+1$ 
(as $x\to\sqrt{2-x^{2}}$ is concave). Thus it would be enough to prove that 
$$
\frac{\sqrt{2}(1-\frac{x^{2}}{4})+x(2-x^{2})}{\sqrt{2}x(1-x)+1}\leq \Psi(1) \left(1+\frac{x^{2}}{2} \right), \quad 0\leq x\leq 1
$$  
If we further use the estimates  $\Psi(1) > \frac{29}{28}$,  
and $\frac{41}{29}< \sqrt{2}< \frac{17}{12}$ (for  denominator and numerator correspondingly), then the last  inequality would follow  from  
$$
\frac{29}{240} \cdot \frac{246x^{4}-486x^3+233x^{2}-12x-8}{29+41x-41x^{2}}\leq 0
$$
The denominator has the positive sign. The negativity of $246x^{4}-486x^3+233x^{2}-12x-8\leq 0$ 
for $0\leq x\leq 1$ follows from the Sturm's algorithm,  
which shows that the polynomial does not have roots on $[0,1]$. Since at point $x=0$ it is negative therefore it is negative on the whole interval.  
\bigskip 
\end{proof}

\subsection{Finding $\Bel$}
\label{FbL}
Since it is easy to verify that $B$ satisfies the range condition 
$B(x, \lambda) \leq \max\{|f|, \sqrt{\lambda}\}$, we have then that $B$ is a subsolution of \eqref{MISqL}, and so, by Theorem \ref{T:bL-GrtSub}
	$$
	B \leq \Bel\,.
	$$
	
	\medskip
	
	Now we want to prove the opposite inequality
\begin{equation}
\label{bLsm}
\Bel\le B\,.
\end{equation}

\begin{lemma}
\label{ode1}
Let even functions $\bell$ and $b$ defined on $[-1,1]$ satisfy $\bell(1)=b(1)=1$, and $b'' +xb'-b=0$, $b\in C^2$, $\bell$ being a convex function such that $\bell''+x \bell'-\bell \ge 0$ on $(-1,1)$  in the sense of distributions. 
Then $\bell\le b$.
\end{lemma}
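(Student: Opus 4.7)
The plan is to adapt the Wronskian argument of Lemma~\ref{ode0} to the reverse-sign situation here: the ODE $b''+xb'-b=0$ has integrating factor $e^{x^2/2}$ (rather than $e^{-x^2/2}$), and the distributional inequality for $\bell$ runs in the opposite direction. By evenness of both functions it suffices to prove $\bell\le b$ on $[0,1]$.

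I would introduce the Wronskian
$$
w(x)\df \bell'(x)\,b(x)-b'(x)\,\bell(x),
$$
with $\bell'$ the a.e.\ (equivalently right) derivative of the convex function $\bell$. Decompose the hypothesized non-negative distribution $\bell''+x\bell'-\bell=g\,dx+d\nu$ into its absolutely continuous and singular parts, so that $g\ge 0$ a.e.\ and $\nu\ge 0$ (the singular part is inherited from the convexity of $\bell$, which forces the distributional second derivative $\bell''$ to be a non-negative measure). A direct computation using $b''=b-xb'$ gives, in the distribution sense,
$$
w'=\bell''b-b''\bell=(gb-xw)\,dx+b\,d\nu,
$$
and multiplying by $e^{x^2/2}$ cancels the $-xw$ term:
$$
\bigl(e^{x^2/2}w\bigr)'=e^{x^2/2}b\,(g\,dx+d\nu)\ge 0
$$
on $(0,1)$, because $b=\Psi/\Psi(1)>0$ on $[-1,1]$.

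Thus $e^{x^2/2}w$ is non-decreasing on $[0,1)$. The initial bound at $x=0$ comes from evenness: since $\bell$ is even and convex, $\bell'(0^-)=-\bell'(0^+)\le\bell'(0^+)$ forces $\bell'(0^+)\ge 0$, while $b'(0)=0$, so $w(0^+)=\bell'(0^+)b(0)\ge 0$. Hence $w\ge 0$ on $[0,1)$, and consequently $(\bell/b)'=w/b^2\ge 0$ a.e.\ there. As $\bell$ is locally absolutely continuous on $(-1,1)$ and $b$ is smooth and positive, the ratio $\bell/b$ is non-decreasing on $[0,1)$. Passing $x\to 1^-$ and using the standard convexity bound $\bell(1^-)\le\bell(1)=1=b(1)$, we obtain $\bell(x)/b(x)\le 1$ for $x\in[0,1]$, i.e.\ $\bell\le b$ on $[0,1]$, and evenness extends this to $[-1,1]$. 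The only delicate bookkeeping is with the singular part $d\nu$, but it is automatically harmless because it only ever appears multiplied by the positive quantity $e^{x^2/2}b$.
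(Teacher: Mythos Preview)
Your proof is correct and follows a genuinely different route from the paper's. The paper argues by a maximum principle: setting $a=\bell-b$, one assumes $a$ attains a strictly positive interior maximum at $s_0$, shows that convexity of $\bell$ forces $a'(s_0^-)=a'(s_0^+)=0$, and then from $a''+xa'-a\ge 0$ obtains $a''\ge a-xa'>\tfrac12 a>0$ in a neighborhood of $s_0$ in the distributional sense, contradicting the existence of an interior maximum. No Wronskian appears.

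Your argument instead transplants the Wronskian technique of Lemma~\ref{ode0} to the present sign-reversed setting (integrating factor $e^{x^2/2}$ instead of $e^{-x^2/2}$), obtaining monotonicity of $\bell/b$ directly. This is a clean and natural approach, and it has the pleasant feature of unifying the two lemmas methodologically. The paper's maximum-principle proof is perhaps more elementary in that it avoids tracking the singular measure through the computation, but your handling of $d\nu$ is fine since it only enters with the positive weight $e^{x^2/2}b$. One small point: positivity of $b$ on $[-1,1]$ is not literally a hypothesis of the lemma, but it follows because the unique even $C^2$ solution of $b''+xb'-b=0$ with $b(1)=1$ is $b=\Psi/\Psi(1)$; you use this implicitly and it is worth a sentence.
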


\begin{proof}
If $\bell$ were in $C^2$ as well, then this would be very easy. In fact, consider $a(x)\df  \bell(x)-b(x)$. At end points it is zero, and $a''+xa'-a\ge 0$. Assume that function  $a$ is strictly positive somewhere, then it should have a maximum, where it is positive. Let it be $x_0$. Then $a(x_0)>0, a'(x_0) =0$. So $a''(x_0) \ge a(x_0)>0$. Then  $x_0$ cannot be maximum, so we come to a contradiction.

If $\bell$ is not $C^2$, we still consider $a(x)\df  \bell(x)-b(x)$, which is still a continuous function 
on $[-1, 1]$ equal to $0$ at the endpoints. If it is positive somewhere, it should have a positive maximum,  let $s_0$ be a point of maximum.

Since $\bell$ is assumed to be convex, function $a'$ is of bounded variation, and as such it is the sum of $f$ and $g$, 
where $f$ is a continuous function and $g$ is a jump function. Notice that 
1) all jumps are positive, as they came only from $\bell$, and  b) $g$ is continuous everywhere except the  countable set of jump points.


As $a'$ is a function of bounded variation it has one-sided limits at any interior point. Let $a'(s_0\pm)$ be right and left limits correspondingly. Since all the jumps are positive we have
$$
 a'(s_0+) \ge a(s_0-).
$$
But $s_0$ is a point of maximum of $a$, so $ a'(s_0-) \ge 0$, $ a'(s_0+) \le 0$.
All together may happen only if $a'(s_0+) = a'(s_0-)=0$. But this means that
$s_0$ is not a jump point.

 By continuity at $s_0$, $ a'$ is  small near $s_0$, but $a(s_0)>0$, so we can choose a small neighborhood of $s_0$, where $|sa'(s)|< \half a(s)$.

Since $a'' +s a'-a\ge 0$,  in this neighborhood of 
$s_0$ we have 
$$
a''\ge a-sa'> \half a \ge 0
$$ 
in the sense of distributions.  But a convex function cannot have maximum strictly inside an interval. We come to a contradiction.

 Lemma is proved.
 \end{proof}

We found the Bellman function $\Bel$, the formula is given in the following theorem.

\begin{theorem}
\label{bLthm0}
\begin{equation}
	\label{L11}
	\Bel(x, \la)=
	\begin{cases}
	\sqrt{\lambda} \frac{\Psi\left(\frac{|x|}{\sqrt{\lambda}}\right)}{\Psi(1)},\quad  &x^2\le \la,
	\\
	\quad |x|,\quad & x^2\ge \la\,.
	\end{cases}
	\end{equation}
	\end{theorem}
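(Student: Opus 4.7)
The plan is to establish the formula by proving $B \le \Bel$ and $\Bel \le B$ separately, where $B$ is the candidate from \eqref{L1}. The first inequality is already in hand: Lemma \ref{subs-2} shows that $B$ satisfies the main inequality \eqref{MISqL}, and the range condition $B(x,\lambda) \le \max\{|x|, \sqrt{\lambda}\}$ is visible from the definition, so Theorem \ref{T:bL-GrtSub} gives $B \le \Bel$. The real work is in the reverse inequality $\Bel \le B$.

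For $\Bel \le B$, the first step is to reduce to one variable by homogeneity: since both $\Bel$ and $B$ satisfy $F(tx, t^2\lambda) = |t|F(x,\lambda)$, it suffices to show $\bell(\tau) \le b(\tau)$ for all $\tau \in \mathbb{R}$, where $\bell(\tau) \df \Bel(\tau, 1)$ and $b$ is the candidate \eqref{E:beta-candidate}. On $\{|\tau|\ge 1\}$, the obstacle condition \eqref{E:beta-OC} forces $\bell(\tau) = |\tau| = b(\tau)$ outright, so the question reduces to proving $\bell \le b$ on $[-1,1]$. The natural tool is Lemma \ref{ode1}, and the plan is to verify its hypotheses for the pair $(\bell, b)$.

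Three of those hypotheses are routine: evenness of $\bell$ comes from evenness of $\Bel$ in $x$; the boundary values $\bell(\pm 1) = b(\pm 1) = 1$ come from \eqref{E:beta-OC} and the definition of $b$; and $b \in C^2$ with $b''+xb'-b = 0$ is immediate from \eqref{Phi}. Convexity of $\bell$ follows from convexity of $\Bel(\cdot,\lambda)$, already listed among its basic properties. The only delicate hypothesis, and the main obstacle, is the distributional differential inequality $\bell'' + \tau\bell' - \bell \ge 0$ on $(-1,1)$.

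To secure this, I would rewrite the main inequality \eqref{E:L-MI} (at $\lambda = 1$, $a = \sigma$) in terms of $\bell$ as
$$
2\bell(\tau) \le \sqrt{1-\sigma^2}\bigg[\bell\!\Big(\tfrac{\tau+\sigma}{\sqrt{1-\sigma^2}}\Big) + \bell\!\Big(\tfrac{\tau-\sigma}{\sqrt{1-\sigma^2}}\Big)\bigg]
$$
and Taylor-expand around $\sigma = 0$; matching the $\sigma^2$ coefficients yields the pointwise inequality wherever $\bell''$ exists, hence almost everywhere by convexity. The passage to the distributional sense is then automatic because the singular part of $(\bell')'$ is a non-negative measure (again by convexity) and can only reinforce the inequality, mirroring the derivation of \eqref{concD} in Section \ref{the_smallest}. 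Once this hypothesis is in place, Lemma \ref{ode1} delivers $\bell \le b$ on $[-1,1]$, completing the proof of $\Bel = B$.
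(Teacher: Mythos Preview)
Your proposal is correct and follows essentially the same route as the paper: the bound $B\le\Bel$ via Lemma~\ref{subs-2} and Theorem~\ref{T:bL-GrtSub}, and the reverse bound by reducing via homogeneity to $\bell\le b$ on $[-1,1]$ and invoking Lemma~\ref{ode1}. If anything, you are slightly more explicit than the paper in justifying the distributional inequality $\bell''+\tau\bell'-\bell\ge 0$ (the paper derives \eqref{E:beta-DI} under a provisional smoothness assumption and then simply appeals to Lemma~\ref{ode1}), so your argument stands as written.
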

	
Let us introduce an obstacle function defined on $\bR^2$.
\begin{equation}
\label{Oobst}
O(x, \la)\df \begin{cases}  |x|, \quad x^2\ge \la\\
\infty,\quad x^2<\la\,.\end{cases}
\end{equation}

\begin{theorem}
\label{bLthm1}
Function $\Bel$
is the largest function satisfying the finite difference inequality
such that  it is majorized by  the obstacle function $O(x, \la)$:
\begin{equation}
\label{MILob}
\Bel(x, \la) \le  O(x, \lambda)\,.
\end{equation}
Moreover,
\begin{equation}
\label{max1}
\Bel(x, \la) =\max\bigg(\sqrt{\la}\frac{\Psi(\frac{x}{\sqrt{\la}})}{\Psi(1)},  |x|\bigg)\,.
\end{equation}
\end{theorem}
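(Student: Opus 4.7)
The plan is to derive both assertions from the explicit formula \eqref{L11} for $\Bel$ established in Theorem \ref{bLthm0}; the two-branch description there essentially contains everything, so what remains are two short supplementary arguments.

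For the representation \eqref{max1}, I would invoke \eqref{ori6}, which says that $\tau\mapsto \Psi(\tau)/\tau$ is strictly decreasing on $(0,\infty)$. Hence $\Psi(\tau)/\Psi(1) \geq \tau$ on $[0,1]$ and $\Psi(\tau)/\Psi(1) \leq \tau$ on $[1,\infty)$. Setting $\tau = |x|/\sqrt{\lambda}$, this shows that in the region $x^2\leq \lambda$ the first branch of \eqref{L11} dominates $|x|$, while in the region $x^2\geq \lambda$ the branch $|x|$ dominates the other; hence \eqref{L11} coincides with the maximum in \eqref{max1}. The obstacle bound $\Bel \leq O$ is immediate from the same formula: on $\{x^2\geq\lambda\}$ one has $\Bel=|x|=O$, and on $\{x^2<\lambda\}$ the obstacle $O$ is $+\infty$.

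The main work, and the hardest step, is the maximality. Let $B$ satisfy the main inequality \eqref{MISqL} together with $B\leq O$. My strategy is to show that $B$ automatically satisfies the stronger range condition $B(x,\lambda)\leq \max\{|x|,\sqrt{\lambda}\}$ appearing in Theorem \ref{T:bL-GrtSub}; the desired conclusion $B\leq \Bel$ then follows from that theorem. On $\{x^2\geq \lambda\}$ this is already $B\leq|x|=O$. On $\{x^2<\lambda\}$ I would apply \eqref{MISqL} with the specific choice
\[
a \df \frac{|x|+\sqrt{2\lambda-x^2}}{2},
\]
which is the positive root of $2a^2-2|x|a+(x^2-\lambda)=0$. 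This value is engineered so that \emph{both} daughter points $(x\pm a,\lambda-a^2)$ land on or outside the parabola $y^2=\mu$, which is precisely the region where $B\leq O$ yields $B\leq|\cdot|$ directly. A Cauchy--Schwarz estimate $|x|+\sqrt{2\lambda-x^2}\leq 2\sqrt{\lambda}$ (with equality iff $x^2=\lambda$) shows $|a|<\sqrt{\lambda}$, so the main inequality is applicable. Putting this together yields
\[
2B(x,\lambda) \;\leq\; |x-a|+|x+a| \;=\; 2\max(|x|,|a|) \;=\; 2|a| \;\leq\; 2\sqrt{\lambda},
\]
where the equality $\max(|x|,|a|)=|a|$ uses that $|a|>|x|$, which again reduces to $x^2<\lambda$. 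This verifies the range condition and closes the argument through Theorem \ref{T:bL-GrtSub}.

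The main obstacle I anticipate is locating the correct choice of $a$: it must simultaneously push both daughter points out of the forbidden region $\{y^2<\mu\}$ \emph{and} satisfy the admissibility constraint $|a|<\sqrt{\lambda}$; the displayed value is essentially the unique minimal such $a$. Once this one-step reduction is identified, the rest of the verification is a short computation, and the level-by-level iteration of Theorem \ref{T:bL-GrtSub} is absorbed as a black box.
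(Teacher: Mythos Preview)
Your argument is correct. The paper states Theorem~\ref{bLthm1} without proof, presenting it as an immediate consequence of the preceding development; your write-up supplies the one step that is not quite automatic.

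The formula \eqref{max1} indeed follows from \eqref{L11} together with \eqref{ori6}, and the bound $\Bel\le O$ is trivial. The only point with content is the maximality. Here you correctly identify that Theorem~\ref{T:bL-GrtSub} is stated under the range condition $B(x,\lambda)\le\max\{|x|,\sqrt{\lambda}\}$, which is genuinely stronger than $B\le O$ (the obstacle $O$ imposes nothing on $\{x^2<\lambda\}$). Your single application of \eqref{MISqL} with
\[
a=\frac{|x|+\sqrt{2\lambda-x^2}}{2}
\]
is exactly what closes this gap: the point $(x-a,\lambda-a^2)$ lands \emph{on} the parabola $y^2=\mu$ and $(x+a,\lambda-a^2)$ lands on or outside it, the admissibility $a<\sqrt{\lambda}$ follows from $|x|+\sqrt{2\lambda-x^2}\le 2\sqrt{\lambda}$ with equality only at $x^2=\lambda$, and the resulting bound $B(x,\lambda)\le a\le\sqrt{\lambda}$ recovers the range condition. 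After that, Theorem~\ref{T:bL-GrtSub} applies verbatim.

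One minor remark: Theorem~\ref{T:bL-GrtSub} is stated for continuous non-negative $B$, while Theorem~\ref{bLthm1} does not name the class of competitors. This is harmless, since the Bellman induction in the proof of Theorem~\ref{T:bL-GrtSub} never actually uses continuity or non-negativity; you may simply note this when you invoke it.
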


\end{document}